\documentclass[11pt,a4paper]{article}

\usepackage{amsmath,amssymb,amscd,amsthm}
\usepackage{graphics}
\input xy
\xyoption{all}

\usepackage{comment}
\usepackage{tikz}

\usetikzlibrary{arrows}

\usepackage[paperwidth=210mm,
            paperheight=297mm,
            a4paper,
            twoside,
            top=1in, left=1.2in, right=1.2in, bottom=1in]{geometry}

\usepackage[active]{srcltx}

\newtheorem{lemma}{Lemma}

\newtheorem{prop}[lemma]{Proposition}
\newtheorem{cor}[lemma]{Corollary}

\newtheorem{defi}[lemma]{Definition}
\newtheorem{thm}[lemma]{Theorem}

\newenvironment{example}[1][Example]{\begin{trivlist}
\item[\hskip \labelsep {\bfseries #1}]}{\end{trivlist}}

\usepackage{float}
\restylefloat{figure}

\begin{document}

\title{Enumeration of basic ideals in type $B$}
\author{Jonathan Nilsson}
\date{\today}
\maketitle

\begin{abstract}
\noindent The number of ad-nilpotent ideals of the Borel subalgebra of the classical Lie algebra of type $B_{n}$ is determined using combinatorial arguments involving a generalization of Dyck-paths.
 We also solve a similar problem for the untwisted affine Lie algebra of type $\tilde{B}_{n}$,
 where we instead enumerate a certain class of ideals called \emph{basic ideals}. This leads to an explicit formula for the number of basic ideals in $\tilde{B}_{n}$, which gives rise to a new integer sequence.
\end{abstract}

\section{Introduction}
Let $\mathfrak{g}$ be a simple complex Lie algebra with a
 fixed triangular decomposition
 \[\mathfrak{g} = \mathfrak{n}_{-} \oplus \mathfrak{h} \oplus \mathfrak{n}_{+}\] in the sense of Moody and Pianzola~\cite{MP} and let
 $\mathfrak{b}=\mathfrak{h} \oplus \mathfrak{n}_{+}$ be the corresponding Borel subalgebra.
 An ideal $\mathfrak{i}$ of $\mathfrak{b}$ is called \emph{ad-nilpotent} provided that the adjoint action of each element of $\mathfrak{i}$ on $\mathfrak{b}$ is nilpotent.
 When $\mathfrak{g}=\mathfrak{sl}_{n}(\mathbb{C})$, the set of strictly upper triangular matrices is a typical
 example of an ad-nilpotent ideal, but there are many other such ideals contained in this one.
 This leads to the problem of enumerating ad-nilpotent ideals, which has been done by Krattenthaler, Orsina and Papi~\cite{KOP} for all simple finite dimensional Lie algebras.
 See also \cite{CP1,CP2,AKOP}.
 For example, when $\mathfrak{g}=\mathfrak{sl}_{n}(\mathbb{C})$, the number of ad-nilpotent ideals in $\mathfrak{b}$ is given by the $n$'th Catalan number $\frac{1}{n+1} \binom{2n}{n}$.

In the case when $\mathfrak{g}$ is an untwisted affine Lie algebra, any ad-nilpotent ideal of $\mathfrak{b}$ is contained in the center of $\mathfrak{g}$
 so the corresponding enumeration problem becomes trivial.
 Baur and Mazorchuk~\cite{BaMa} formulated another problem, where instead of ad-nilpotent ideals, one considers the so called \emph{basic ideals} (see the definition in Section~\ref{s4}).
The enumeration problem for basic ideals was solved for the affine algebra $\tilde{\mathfrak{sl}}_{n}$ which led to a new integer sequence.

In the present paper we solve the enumeration problem for basic ideals for untwisted affine Lie algebras of affine type $\tilde{B}_{n}$. We prove the following statement.

\begin{thm}
\label{thmmain}
The number $\tilde{b}_{n}$ of basic ideals in the Lie algebra of type $\tilde{B}_{n}$ is given by the formula
\[\tilde{b}_{n} = (3n+5)2^{2n-2} - 2(3n-1)\binom{2n-2}{n-1}.\]
\end{thm}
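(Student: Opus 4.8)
The plan is to convert the classification of basic ideals in $\tilde B_n$ into a finite combinatorial enumeration, in direct parallel with the treatment of ad-nilpotent ideals in finite type $B_n$ and with the solution of Baur and Mazorchuk for $\tilde{\mathfrak{sl}}_n$. I would first record a homogeneous ideal of $\tilde{\mathfrak b}$ by the set of affine roots whose root spaces it contains; with $\Delta$ the root system of $B_n$ and $\delta$ the imaginary root, the positive affine roots are $\Delta_+\cup\{\alpha+k\delta:\alpha\in\Delta,\ k>0\}\cup\{k\delta:k>0\}$, and the set attached to an ideal is a filter (an up-closed set) for the dominance order on this poset. The next, and most substantial, step is to unwind the definition of \emph{basic ideal} from Section~\ref{s4} into an explicit closure-and-finiteness condition on such a filter. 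I expect the outcome to be that a basic ideal is determined by finitely much data: a filter in the finite $B_n$-poset $\Delta_+$ together with a bounded ``affine'' parameter describing how that filter is repeated in the $\delta$-direction. Making this reduction precise and bijective is where the genuine work lies.

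With the model in hand I would encode the finite data as lattice paths, generalizing the path encoding used for $B_n$. Since the coexistence of short and long roots makes the $B_n$-poset a staircase, its filters are recorded not by Dyck paths but by \emph{free} monotone paths, which already explains why the answer should involve $2^{2n-2}$ (all sign sequences) and $\binom{2n-2}{n-1}$ (balanced sequences) rather than Catalan numbers. I anticipate that the affine parameter attaches to each such path $p$ of length $2n-2$ a set of admissible markings whose size $g\bigl(h(p)\bigr)$ is a piecewise-linear function of the terminal deviation $h(p)=(\text{number of up-steps of }p)-(n-1)$, with the precise count degenerating at the boundary --- for paths of extremal height and for balanced paths, where the short roots interact with the affine node.

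The enumeration then becomes a weighted sum over paths,
\[
\tilde b_n \;=\; \sum_{p} g\bigl(h(p)\bigr),
\]
which I would evaluate by splitting $g$ into its constant, $|h|$, and $[h=0]$ contributions and applying three standard binomial sums:
\[
\sum_{p} 1 = 2^{2n-2},\qquad
\sum_{p}\lvert h(p)\rvert = (n-1)\binom{2n-2}{n-1},\qquad
\#\{p: h(p)=0\}=\binom{2n-2}{n-1}.
\]
The middle identity is the mean-absolute-deviation identity $\sum_{k}\binom{2m}{k}\lvert k-m\rvert = m\binom{2m}{m}$ with $m=n-1$. Collecting the three contributions yields a linear combination of $2^{2n-2}$ and $\binom{2n-2}{n-1}$ with polynomial coefficients, which I would then match against $(3n+5)2^{2n-2}-2(3n-1)\binom{2n-2}{n-1}$; the small cases $n=1,2,3$, giving $4,24,128$, serve as a check on the coefficients.

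The main obstacle is unmistakably the second step: correctly distilling the defining property of a basic ideal into the combinatorial closure condition, proving the resulting correspondence with marked free paths is a bijection, and pinning down the marking-count $g$ including its boundary degeneracies. Once this dictionary is established the final summation is routine binomial bookkeeping, so essentially all of the difficulty --- and all of the type-$B$ specific structure, in particular the role of short roots and of the affine node --- is concentrated in setting up the path model rather than in the counting itself.
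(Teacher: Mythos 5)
There is a genuine gap, and it sits exactly where you locate the difficulty: the reduction of basic ideals to finite combinatorial data. The correct model (following Baur--Mazorchuk and worked out in Sections~\ref{s4}--\ref{s5}) is that a basic ideal is determined by its support inside the window $D=\{\delta\}\cup\Delta_+\cup(\delta-\Delta_+)$, which is a poset made of \emph{two} copies of the positive-root triangle of $B_n$ (one order-reversed) linked by cross-relations coming from $\alpha_{\max}$. A basic ideal therefore corresponds to a \emph{pair} of coideals, one in each triangle, subject to a compatibility condition --- equivalently an admissible pair $(p,q)$ of $B_n$-paths --- not to a single filter in $\Delta_+$ decorated by a one-dimensional affine marking. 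Concretely, if the first two $f$'s of $p$ sit at positions $a<b$, then $q$ must lie in $\bigcup_{i\ge 2n+4-b}\,\bigcup_{j\ge 2n+4-a}\mathfrak{B}_n(i,j)$; the number of admissible $q$'s depends on the pair $(a,b)$, i.e.\ on \emph{where} the first two descents of $p$ occur, not on the terminal deviation $h(p)$. So the ansatz $\tilde b_n=\sum_p g(h(p))$ with $g$ a function of the endpoint alone is false, and the three binomial sums you propose cannot produce the answer.

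Two further points. First, coideals of the finite $B_n$-poset are counted by $\binom{2n}{n}$ via ballot-type words of length $2n$ (at least as many $r$'s as $f$'s in every prefix), not by $2^{2n-2}$ free monotone paths of length $2n-2$; your heuristic reading of the final formula as ``all sign sequences minus balanced ones'' does not reflect the actual combinatorial objects. Second, the appearance of $2^{2n-2}$ and $\binom{2n-2}{n-1}$ in the closed form is not transparent from the path model: the enumeration of admissible pairs yields a nested multiple sum over the parameters $(a,b,i,j)$ (split into four boundary cases), and its reduction to the stated formula requires a hypergeometric transformation (Lemma~\ref{lemmaCK2}) and a long chain of binomial manipulations; the paper explicitly states that no direct combinatorial explanation of the resulting simple expression or of the recursion it satisfies is known. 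The final ``routine bookkeeping'' you anticipate is in fact the second major piece of work, not a formality.
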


To prove Theorem~\ref{thmmain} we establish a combinatorial scheme in which basic ideals for type $B$ are encoded using certain pairs of type $B$-analogues for Dyck-paths.
This gives an explicit but complicated formula for the number of basic ideals. The simplification of this expression to the simple form in Theorem~\ref{thmmain} is a nontrivial
 combinatorial computation which was performed by Christian Krattenthaler. The above simple formula, in particular, implies that the sequence $\tilde{b}_{n}$
 satisfies the following non-homogeneous linear recurrence relation. \[\tilde{b}_{n}-8\tilde{b}_{n-1}+16\tilde{b}_{n-2} = \frac{24}{n-1} \binom{2n-6}{n-2}, \quad n \geq 4.\] We do not have any direct combinatorial explanation
 for this recursion.

The paper is organized as follows. In Section~\ref{s2} we focus on Lie algebras of regular type $B$. We introduce \emph{$B$-paths}, an analogue for Dyck paths in type $B$, and show that the set of
 such paths bijectively corresponds to the set of ad-nilpotent ideals in type $B$. We use this to show that the number of ad-nilpotent ideals in the Lie algebra of type $B_{n}$ equals $\binom{2n}{n}$, the
 type $B$ Catalan number, 
 which agrees with a previous result~\cite{KOP}. After this we want to generalize these ideas to the affine case. 
 The untwisted affine Lie algebra of type $B$ is introduced in Section~\ref{s3} and in Section~\ref{s4} we study its root system structure. Following Baur and Mazorchuk~\cite{BaMa}, we then proceed to define the concept of
 a basic ideal, our affine analogue of a nilpotent ideal, and in Section~\ref{s5} we show that there is a bijective correspondence between basic ideals and a
 certain set of pairs of $B$-paths called \emph{admissible} pairs. This reduces our problem to
 an enumeration of the admissible pairs. Finally, in Section~\ref{s6} we write down an explicit formula for the number of admissible pairs by considering a number of cases.
 We then proceed to simplify this expression using a number of combinatorial lemmas, and after a lot of simplification we arrive at the short expression of Theorem \ref{thmmain}.

\section{The simple algebra of type $B$}
\label{s2}
\subsection{Root system structure}
Following the notation of Bourbaki~\cite{Bo}, the root system of type $B_{n}$ can be constructed in $\mathbb{R}^{n}$ as follows. 
Let $\{e_{i}\}_{i=1}^{n}$ be the standard orthogonal basis for $\mathbb{R}^{n}$. For $1 \leq i \leq n-1$ define $\alpha_{i}=e_{i}-e_{i+1}$, and let $\alpha_{n}=e_{n}$.
Then the (simple roots) $\alpha_{1}, \ldots , \alpha_{n}$ constitutes a base for a root system of type $B_{n}$ where the positive roots are given by
\[\sum_{j \leq i \leq k} \alpha_{i},  \quad  1 \leq j \leq k \leq n;   \qquad \sum_{j \leq i \leq k} \alpha_{i} +  \sum_{k+1 \leq i \leq n} 2\alpha_{i}, \quad 1 \leq j < k < n.\]
We identify each such root with the integer pair $(r,s)$, such that $r$ is the minimal value of $i$ for which the $\alpha_{i}$-coefficient of the
 root in the simple basis is non-zero, and $s=r+h-1$ where $h$ is the height of the root (the sum of its coefficients in the simple basis).
 For example, in $B_{7}$ we write $(3,10)$ for the root $(0,0,1,1,2,2,2)$, and $(2,5)$ for $(0,1,1,1,1,0,0)$.

Positive roots are partially ordered via
\[\alpha \prec \beta \Longleftrightarrow \; \beta - \alpha \text{ is a } \mathbb{Z}_{\geq 0} \text{-linear combination of simple roots.}\]
This order can of course be extended to all of $\mathfrak{h}^{*}$. It is easy to check that $(r,s) \prec (\hat{r},\hat{s})$ if and only if $\hat{r} \leq r$ and
 $\hat{s} \geq s$.
The ($\prec$)-poset structure on positive roots can now be visualized in a modified version of the Hasse-diagram. Consider a diagram consisting of rows and columns.
 If $(i,j)$ is a root then we write it in the
 $i$'th row (from top to bottom) and $j$'th column (from left to right), in other words, we write $(i,j)$ in the $(i,j)$'th position. 
Then $\prec$-greater roots are always in the north and east directions.

Not all positions in the diagram will contain roots. We obviously have $i \geq 1$ and  $j \geq 1$. We must also have $i \leq j$, since positive roots have positive height,
 and we must have $i \leq 2n-j$ (compare the original expression of a general positive root). Thus, all roots lie in a triangular scheme. For example, in $B_{4}$ the positive roots are ordered as follows.
\vspace{-10mm}\[ \xymatrix{                                                                                                                                                \\
(1,1) \ar[r]     & (1,2)   \ar[r]       & (1,3) \ar[r]         & (1,4)  \ar[r]        & (1,5) \ar[r]      & (1,6) \ar[r]         & (1,7)     \\
                 & (2,2) \ar[r] \ar[u]   & (2,3) \ar[r] \ar[u]  & (2,4) \ar[r] \ar[u]  & (2,5) \ar[r] \ar[u]  & (2,6)  \ar[u]\\
                 &                      & (3,3) \ar[r] \ar[u] & (3,4) \ar[r] \ar[u] & (3,5) \ar[u]  \\
                 &                      &                      & (4,4).  \ar[u]            }
\]
Here $\alpha \rightarrow \beta$ means that $\beta$ covers $\alpha$ with respect to $\prec$, that is $\beta$ is a minimal element of the set $\{\gamma | \alpha \prec \gamma\}$.
Thus we have $\alpha \prec \beta$ if and only if there is
 a directed path from $\alpha$ to $\beta$.

Any ad-nilpotent ideal $\mathfrak{i}$ in $\mathfrak{n}_{+}$ can be decomposed as a direct sum of some root spaces corresponding to positive roots.
 If $\mathfrak{i}$ contains some nonzero $x$ from a root space $L_{\gamma}$, it contains all of
 $L_{\gamma}$ since the root spaces are one-dimensional. Successively commuting $x$ with elements from different root spaces $L_{\alpha_{i}}$ we can obtain elements from
 any root space $L_{\gamma'}$ where $\gamma' \succ \gamma$, and hence $\mathfrak{i}$ also contains all of these root spaces.
 This shows that an ad-nilpotent ideal in the Borel subalgebra of $B_{n}$ corresponds precisely to a coideal of the poset structure above (recall that a coideal of a poset is subposet $S$
 such that $\alpha \in S$ and $\beta \geq \alpha$ implies $\beta \in S$). Since all the arrows above point either up or right,
 such a subset can be specified by choosing a path in the diagram, going in the south and east directions partitioning the roots.
 Counting the ad-nilpotent ideals then corresponds to counting the number of such paths.

\subsection{$B$-paths}
To formalize this idea of a path, for the algebra of type $B_{n}$, consider a $(2n-1) \times n$ rectangle in $\mathbb{R}^{2}$
 with the north-west corner in $(0,0)$ and the south-east corner in $(2n-1,-n)$. This rectangle contains the triangular shape above in the
 natural way if we think of the roots as $1\times 1$-boxes in
 the $\mathbb{Z} \times \mathbb{Z}$-lattice. For example, in the case $B_{4}$ we have

\[
\begin{tikzpicture}
  \draw[color=gray] (0,-4) grid (7,0);
  \node (a) at (0,0.25) {(0,0)};
  \node (a) at (7.5,-4) {(7,-4)};
  \draw[very thick] (0,0) -- (1,0) -- (2,0) -- (3,0) -- (4,0) -- (5,0) -- (6,0) -- (7,0) 
      -- (7,-1) -- (6,-1) -- (6,-2) -- (5,-2) -- (5,-3) -- (4,-3) -- (4,-4) -- (3,-4) 
       -- (3,-3) -- (2,-3) -- (2,-2) -- (1,-2) -- (1,-1) -- (0,-1) --cycle;
\end{tikzpicture}
\]

\begin{defi} A $B_{n}$-path or just a  $B$-path if $n$ is understood, is
 a word of length $2n$ on the alphabet $\{r,f\}$ such that each prefix of this word contains at least as many $r$'s as $f$'s.
\end{defi}

Note that this generalizes what is known as a {\bf Dyck-path}. A Dyck-path is a $B$-path with an equal number of $f$'s and $r$'s, see Grimaldi~\cite{GRI}.

\begin{example}
$rrffrrfrrr$ is a $B_{5}$-path and $rrrrrrrr$ is a $B_{4}$-path, but $rrfrffrffrrrrr$ is not a $B_{7}$-path since the first $9$ letters contain more $f$'s than $r$'s.  
\end{example}

We shall sometimes use the notation $rrrrr=r^{5}$ and so on.

Let $\mathfrak{g}_{n}$ be a Lie algebra of type $B_{n}$ and let $\mathfrak{b}_{n}$ be a Borel subalgebra in $\mathfrak{g}_{n}$.

\begin{prop}
\label{pathprop}
There is a 1-1 correspondence between the set of all ad-nilpotent ideals of $\mathfrak{b}_{n}$ and the set of all $B_{n}$-paths.
\end{prop}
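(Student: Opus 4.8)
The plan is to build directly on the reduction already carried out above: an ad-nilpotent ideal of $\mathfrak{b}_n$ is the same thing as a coideal of the $\prec$-poset of positive roots, so it suffices to exhibit a bijection between these coideals and $B_n$-paths. I would describe a coideal by the monotone staircase that separates it (sitting to the north-east) from its order-ideal complement (to the south-west) inside the triangular scheme. Since $\prec$-larger roots lie north and east, this separating boundary is a lattice path built from unit horizontal and unit vertical steps, and I would read it off as a word by recording one step-type as $r$ and the other as $f$. Fixing the reading direction (say, starting from the north-west corner $(0,0)$ of the bounding $(2n-1)\times n$ rectangle) and treating the empty and the full ideal as the two extreme boundaries pins the map down.

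The heart of the argument is to show that this word is always a $B_n$-path and that the assignment is a bijection. Here the two inequalities that cut out the triangular region should do the work: the constraint $i\le j$ (positivity of the height) ought to translate into the prefix condition that every initial segment contains at least as many $r$'s as $f$'s, i.e. that the path never crosses the relevant bounding edge, while the opposite constraint $i\le 2n-j$ should be what forces the recorded word to have length exactly $2n$ — matching the fact that each bounding edge of the triangle consists of $n$ horizontal together with $n$ vertical unit segments. I would then construct the inverse explicitly: given a word satisfying the prefix condition, lay out the corresponding staircase step by step, observe that the ballot condition keeps it inside the triangle, and take the coideal to consist of exactly the roots lying to the north-east of it. Checking that the two constructions are mutually inverse, together with the two extreme cases, then yields the 1--1 correspondence.

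The main obstacle, and the step I would spend the most care on, is setting up this dictionary precisely: obtaining a single, direction-consistent reading of the boundary that produces a word of length exactly $2n$ for every coideal (including the degenerate empty and full ideals, whose boundaries look rather different), and verifying that ``every prefix has at least as many $r$'s as $f$'s'' is genuinely equivalent to ``the staircase has not left the triangular region''. Once that correspondence between letter-counts and the poset's defining inequalities is nailed down, both the well-definedness and the invertibility of the map reduce to routine bookkeeping. One could instead merely match cardinalities, since the $B_n$-paths are counted by $\binom{2n}{n}$ via reflection and the coideals are known to number $\binom{2n}{n}$; but an explicit bijection is preferable here, as the affine analysis in the later sections is built on exactly this path encoding.
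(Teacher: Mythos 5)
Your proposal is correct and follows essentially the same route as the paper: both encode a coideal of the positive-root poset by the monotone staircase separating it from its complement in the $(2n-1)\times n$ rectangle, translate horizontal/vertical steps into $r$/$f$, and invert by taking the roots to the north-east of a given path. The one bookkeeping detail worth noting is that the paper starts the path at $(-1,0)$ rather than at the corner $(0,0)$, which is exactly what makes every word have length $2n$ and begin with $r$ --- precisely the normalization issue you flagged as the step requiring the most care.
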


\begin{proof}
Let $\mathfrak{i}$ be an ad-nilpotent ideal in $\mathfrak{b}_{n}$. Consider the $(2n-1) \times n$ rectangular picture described above
 in which boxes are identified with positive roots of $\mathfrak{g}_{n}$. If $\mathfrak{i}$ contains a root element for
 some root $\alpha$, the fact that $\mathfrak{i}$ is an ideal implies that $\mathfrak{i}$ contains root elements for all roots $\beta$ such that
 $\beta \succ \alpha$. From this it follows that there is a unique path $p=p(\mathfrak{i})$ which has the following properties.
 \begin{itemize}
  \item  $p$ starts at $(-1,0)$;
  \item each step of $p$ goes to the right (along $(1,0)$) or down (along $(0,-1)$);
  \item $p$ terminates at a point of the form $(2n-1-x,-x)$, $x \in \{0,1, \ldots , n\}$;
  \item $p$ separates all boxes corresponding to elements of $\mathfrak{i}$ from all other boxes.
 \end{itemize}
Substituting each $(1,0)$-step in $p$ with $r$ and each $(0,-1)$-step in $p$ with $f$, we obtain a $B_{n}$-path.

Conversely, given a $B_{n}$-path we make the reverse substitution and obtain a path satisfying the first three conditions above.
 It is easy to see that the linear span of root elements corresponding to the roots lying to the northeast of this path is an ad-nilpotent ideal
 of $\mathfrak{b}_{n}$. The claim follows.
\end{proof}

Here is an explicit example in type $B_{4}$. The $B_{4}$-path $rrrfrfrr$ is drawn below, it corresponds to the ideal spanned by the root spaces of the roots
 $(1,3)$, $(1,4)$, $(1,5)$, $(1,6)$, $(1,7)$, $(2,4)$, $(2,5)$ and $(2,6)$.
 Also note that any pair of (non-zero) elements from the root spaces of the roots $(1,3)$ and $(2,4)$ generates this ideal.
\[
\begin{tikzpicture}
  \draw (0,0) -- (1,0) -- (2,0) -- (3,0) -- (4,0) -- (5,0) -- (6,0) -- (7,0) 
      -- (7,-1) -- (6,-1) -- (6,-2) -- (5,-2) -- (5,-3) -- (4,-3) -- (4,-4) -- (3,-4) 
       -- (3,-3) -- (2,-3) -- (2,-2) -- (1,-2) -- (1,-1) -- (0,-1) --cycle;
  \draw[very thick,color=red] (-1,0) -- (0,0) -- (1,0) -- (2,0) -- (2,-1) -- (3,-1) -- (3,-2) -- (4,-2) -- (5,-2);
\end{tikzpicture}
\] 
The path $rfrfrfrf$ gives the improper coideal containing all the roots, and the path $rrrrrrrr$ corresponds to the empty coideal containing no roots.

We can now state a corollary regarding the number of ad-nilpotent ideals in $B_{n}$. This result was proved in another way by Krattenthaler, Orsina, and Papi~\cite{KOP}.

\begin{cor}
  The number of $B_{n}$-paths, hence also the number of ad-nilpotent ideals in $\mathfrak{b}_{n}$, equals $\binom{2n}{n}$.
\end{cor}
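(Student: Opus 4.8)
The plan is to recast $B_{n}$-paths as nonnegative lattice paths and then count them by a reflection argument. I would encode each letter $r$ as an up-step $(+1)$ and each letter $f$ as a down-step $(-1)$, so that a word of length $2n$ becomes a lattice path of $2n$ unit steps starting at height $0$. The defining prefix condition — that every prefix contains at least as many $r$'s as $f$'s — is then exactly the condition that every partial height is nonnegative. Thus $B_{n}$-paths correspond bijectively to lattice paths of length $2n$ starting at $0$ that never drop below $0$. Such a path with $k$ down-steps (and hence $2n-k$ up-steps) ends at height $2n-2k$, which is nonnegative precisely when $k \in \{0,1,\dots,n\}$.

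Next I would count, for each fixed $k$, the number $N_{k}$ of nonnegative paths with exactly $k$ down-steps. Ignoring the constraint there are $\binom{2n}{k}$ such paths, so it suffices to count the \emph{bad} ones that reach height $-1$ at some point. Here I would apply Andr\'e's reflection: reflect the initial segment of a bad path, up to its first visit to height $-1$, in the line $y=-1$. This sends the starting height $0$ to $-2$ while fixing the endpoint, and it gives a bijection between bad paths with $k$ down-steps and unconstrained paths from height $-2$ to height $2n-2k$. A short displacement count shows the latter have exactly $k-1$ down-steps, so there are $\binom{2n}{k-1}$ of them. Hence
\[
  N_{k} = \binom{2n}{k} - \binom{2n}{k-1}.
\]

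Finally I would sum over all admissible $k$. The total number of $B_{n}$-paths is
\[
  \sum_{k=0}^{n} N_{k} = \sum_{k=0}^{n}\left(\binom{2n}{k} - \binom{2n}{k-1}\right) = \binom{2n}{n} - \binom{2n}{-1} = \binom{2n}{n},
\]
since the sum telescopes and $\binom{2n}{-1}=0$. Combined with Proposition~\ref{pathprop}, this shows that both the number of $B_{n}$-paths and the number of ad-nilpotent ideals in $\mathfrak{b}_{n}$ equal $\binom{2n}{n}$. As a consistency check, the term $k=n$ equals $\binom{2n}{n}-\binom{2n}{n-1}$, the Catalan number $C_{n}$, matching the earlier remark that Dyck paths are precisely the $B_{n}$-paths with equally many $r$'s and $f$'s.

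The step I expect to require the most care is the reflection bookkeeping: one must check that reflecting in $y=-1$ really does produce a bijection onto all paths ending at the same height with one fewer down-step, so as to avoid an off-by-one error in the displacement count; everything else is routine. An alternative I would keep in reserve is a direct bijection between nonnegative paths of length $2n$ and arbitrary lattice paths of length $2n$ that return to height $0$ (of which there are visibly $\binom{2n}{n}$), but the telescoping reflection argument above seems the most economical.
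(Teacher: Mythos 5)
Your proof is correct, but it takes a genuinely different route from the paper. The paper argues by induction on $n$: every $B_{n+1}$-path is obtained from a $B_n$-path by appending one of the four two-letter suffixes $rr, rf, fr, ff$, and this fails to give a valid path only for the suffixes $fr$ and $ff$ when the original path is a Dyck path (equal numbers of $r$'s and $f$'s); quoting the Catalan count $\frac{1}{n+1}\binom{2n}{n}$ for Dyck paths of semilength $n$ then yields the recursion $4\binom{2n}{n}-\frac{2}{n+1}\binom{2n}{n}=\binom{2n+2}{n+1}$. You instead count directly: you stratify by the number $k$ of $f$'s, apply Andr\'e's reflection in the line $y=-1$ to get $N_k=\binom{2n}{k}-\binom{2n}{k-1}$, and telescope over $k=0,\dots,n$. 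Your displacement bookkeeping is right (a bad path reflected up to its first visit to $-1$ becomes an unconstrained path from $-2$ to $2n-2k$, which forces exactly $k-1$ down-steps), and the telescoping sum collapses to $\binom{2n}{n}$ as claimed. What each approach buys: your argument is self-contained and does not require the Dyck-path enumeration as an external input (indeed it reproves it as the $k=n$ term, as you note), while the paper's induction is shorter on the page precisely because it outsources the hard part to the known Catalan formula and gives a clean ``extension'' picture that matches the geometric rectangle diagrams used throughout the paper.
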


\begin{proof} A $B_{n}$-path can be extended to a $B_{n+1}$-path by appending two letters on the right, and it is clear that any $B_{n+1}$-path can be obtained this way.
There are four possible choices of what to append: $rr$, $rf$, $fr$, $ff$ and each of these gives a $B_{n+1}$-path, \emph{except} when the original word
 consisted of as many $f$'s as $r$'s - in this case $fr$ and $ff$ violate the condition of being a $B$-path (compare with the figure below where
 the top path $rrrrrfrr$ may be completed in $4$ ways, but the bottom path $rfrrrfff$ in only $2$ ways,
 when $n$ increases from $4$ to $5$)
\[
\begin{tikzpicture}
  \draw (0,0) -- (1,0) -- (2,0) -- (3,0) -- (4,0) -- (5,0) -- (6,0) -- (7,0) -- (8,0) -- (9,0) 
 -- (9,-1) -- (8,-1) -- (8,-2) -- (7,-2) -- (7,-3) -- (6,-3) -- (6,-4) -- (5,-4) -- (5,-5) -- (4,-5) -- (4,-4)
 -- (3,-4)  -- (3,-3) -- (2,-3) -- (2,-2) -- (1,-2) -- (1,-1) -- (0,-1) --cycle;
  \draw  (7,0) -- (7,-1) -- (6,-1) -- (6,-2) -- (5,-2) -- (5,-3) -- (4,-3) -- (4,-4); 
  \draw (7,0) -- (8,0) -- (9,0); 
  \draw[very thick,color=red]  (-1,0) -- (0,0) -- (1,0) -- (2,0) -- (3,0) -- (4,0) -- (4,-1) -- (5,-1) -- (6,-1);
  \draw[very thick,color=blue] (-1,0) -- (0,0) -- (0,-1) -- (1,-1) -- (2,-1) -- (3,-1) -- (3,-2) -- (3,-3) -- (3,-4);
  \draw[very thick,color=blue, dashed] (3,-4)--(5,-4);
  \draw[very thick,color=blue, dashed] (4,-4)--(4,-5);
  \draw[very thick,color=red, dashed] (8,-1)--(6,-1)--(6,-3);
  \draw[very thick,color=red, dashed] (7,-1)--(7,-2)--(6,-2);
\end{tikzpicture}
\]
 A $B_{n}$-path where there are an equal number
 of $f$'s and $r$'s is precisely a Dyck-path of semilength $n$, and there are $\frac{1}{n+1}\binom{2n}{n}$ of these (see for example Grimaldi~\cite{GRI}). 

Assuming we have $\binom{2n}{n}$ $B_{n}$-paths, the number of $B_{n+1}$-paths is thus given by
 \[4\binom{2n}{n} - \frac{2}{n+1}\binom{2n}{n} = \binom{2(n+1)}{n+1}.\]
 Hence, by induction, the proof is complete once we note that
 it is true for $B_{2}$, and it is easy to check that we have precisely $\binom{4}{2}=6$ paths then.
 This completes the proof.
\end{proof}

\section{The affine algebra of type $B$}
\label{s3}
The situation above has an analogue when we instead of the type $B_{n}$ consider the untwisted affine Lie algebra
 $\tilde{\mathfrak{g}}_{n}$ of type $\tilde{B}_{n}$.
The algebra $\tilde{\mathfrak{g}}_{n}$ is obtained as follows. First consider the loop algebra
 $\mathfrak{g}_{n} \otimes \mathbb{C}[t,t^{-1}]$ endowed with the bracket $[x\otimes t^{m},y\otimes t^{n}] = [x,y] \otimes t^{m+n}$.
The universal central extension of the Loop algebra is explicitly, $(\mathfrak{g}_{n} \otimes \mathbb{C}[t,t^{-1}]) \oplus \mathbb{C}c$
 where the bracket is modified as follows:
\[[x \otimes t^{m} + \lambda c, y \otimes t^{n} + \mu c] = [x,y] \otimes t^{m+n} +  m (x|y) \delta_{m+n,0} c\] where $(\cdot | \cdot )$ is the Killing form of $\mathfrak{g}_{n}$.
The algebra $\tilde{\mathfrak{g}}_{n}$ is now obtained by extending the latter by a derivation $d$ which acts as follows
 $[d,(x\otimes t^{m})] = m (x \otimes t^{m})$ and $[d,c]=0$. 
The resulting Lie algebra $\mathfrak{\tilde{g}}_{n}$ is called the {\bf untwisted affine Kac-Moody algebra} corresponding to $\mathfrak{g}_{n}$. 
The algebra $\tilde{\mathfrak{g}}_{n}$ has a triangular decomposition
 $\mathfrak{\tilde{g}_{n}} = \mathfrak{\tilde{n}}_{-} \oplus \mathfrak{\tilde{h}} \oplus \mathfrak{\tilde{n}}_{+}$, where
 \[\mathfrak{\tilde{n}}_{+} = Span(\{x \otimes 1| x \in \mathfrak{n_{+}}\} \cup \{x \otimes t^{k} | x \in \mathfrak{g}_{n}, k>0\})\]
\[\mathfrak{\tilde{n}}_{-} = Span(\{y \otimes 1| y \in \mathfrak{n_{-}}\} \cup \{y \otimes t^{k} | y \in \mathfrak{g}_{n}, k>0\})\]
 \[\mathfrak{\tilde{h}} = Span(\{c,d\} \cup \{h \otimes 1 | h \in \mathfrak{h}\}).\]

The corresponding Borel subalgebra $\mathfrak{\tilde{b}_{n}} = \mathfrak{\tilde{h}} \oplus \mathfrak{\tilde{n}}_{+}$ no longer contains any nontrivial ad-nilpotent ideals,
 so instead we consider {\bf combinatorial ideals}, that is, ideals $\mathfrak{i}$ of $\mathfrak{\tilde{b}_{n}}$ contained in $\mathfrak{\tilde{n}}_{+}$
 which have finite codimension and are unions of entire root spaces:
 $\mathfrak{i} \cap \tilde{\mathfrak{g}}_{\alpha} \in \{ \{0\}, \tilde{\mathfrak{g}}_{\alpha}\}$ for each positive root $\alpha$.
For any combinatorial ideal $\mathfrak{i}$ we define $supp(\mathfrak{i})$, called the \emph{support} of $\mathfrak{i}$, to
 be the set of positive roots whose root spaces are are contained in $\mathfrak{i}$, that is, $supp(\mathfrak{i})=\{ \alpha \in \mathfrak{\tilde{h}}^{*} | \tilde{\mathfrak{g}}_{\alpha} \subset \mathfrak{i}\}$.

\section{Root system structure in type $\tilde{B}_{n}$}
\label{s4}
To increase readability we shall sometimes identify a root system with its type.
Recall that ad-nilpotent ideals in the Borel algebra for type $B_{n}$ were uniquely determined by the coideals of the poset of positive roots.
Both $B_{n}$ and $\tilde{B}_{n}$ are partially ordered with respect to the order $\prec$ defined with respect to the corresponding sets of simple roots.

Let $\delta \in \tilde{B}_{n}$ be the indivisible positive imaginary root. Then any imaginary root of $\tilde{B}_{n}$ can be written as $k \delta$,
 for $k \in \mathbb{Z} \setminus \{0\}$ and
 any real root of $\tilde{B}_{n}$ can be written as $\gamma + k \delta$ for some $\gamma \in B_{n}$ and some $k \in \mathbb{Z}$.
Let $\alpha_{max}$ be the maximal root in $B_{n}$ and defining $\alpha_{0} = \delta -\alpha_{max}$. Then $\alpha_{0}, \alpha_{1}, \ldots, \alpha_{n}$ is
 a base for $\tilde{B}_{n}$. Let $D_{1}=\{\delta\}$, let $D_{2}$ be the set of positive roots of $B_{n}$, and let
 $D_{3}$ be the set of roots of form $\delta + \gamma$ where $\gamma$ is a negative root of $B_{n}$. These three sets are disjoint, and we define $D:=D_{1} \cup D_{2} \cup D_{3}$.
 Then the positive roots of $\tilde{B}_{n}$ are the roots of form
 $d + k \delta$ for $d \in D$, $k \geq 0$, and the negative roots correspond to $k<0$ (except $0$ which is not a root at all).

A combinatorial ideal $\mathfrak{i}$ whose support (i.e. the set of roots $\alpha$ for which
 $\tilde{\mathfrak{g}}_{\alpha} \subset \mathfrak{i}$ ) intersects $D$ nontrivially is called a {\bf basic ideal}.
 Baur and Mazorchuk~\cite{BaMa} showed that any combinatorial
 ideal can be obtained from a unique basic ideal through ``translation'' in the $\delta$-direction. The number of basic ideals
 is what we are going to count in the remainder of this paper.

We now describe the poset structure of $D$ in terms of its partition above. We have $D_{1}=\{\delta\}$ and $\delta$ is the maximum element of the poset $D$.
$D_{2}$ is the set of positive roots of $B_{n}$ which is fully embedded into $D$ as a poset. Thus the poset structure on $D_{2}$ is precisely as in Section~\ref{s2}.
$D_{3}$ consists of roots of form $\delta - \alpha$ where $\alpha$ is a positive root of $B_{n}$.
 These are ordered in the reversed way,
 since $\alpha \prec \beta \Leftrightarrow \delta - \alpha \succ \delta - \beta$. The roots of $D_{3}$ can be organized into a digram similarly
 to how we organized positive roots of $B_{n}$ but reflected with respect to the horizontal line. More explicitly,
 in the position $(i,j)$ we write $\delta - (n-i+1,2n-j)$ whenever $(n-i+1,2n-j)$ is a positive root. Then for $\hat{i} \leq i$ and $\hat{j} \geq j$,
 $\delta - (n-\hat{i}+1,2n-\hat{j}) \succ \delta - (n-i+1,2n-j)$, so greater elements are again in the north and east directions.
 Putting the diagrams $D_{2}$ and $D_{3}$ together
 we obtain a diagram which is illustrated below in the case $n=4$.
\[
    \xymatrix@C=4mm@R=5mm{ 
                 &                     &                     & \delta-(4,4)               &                      &                      &  \\
                 &                      & \delta-(3,5) \ar[r]        & \delta-(3,4) \ar[r] \ar[u] & \delta-(3,3)      \\
                 & \delta-(2,6) \ar[r]        & \delta-(2,5) \ar[r] \ar[u] & \delta-(2,4) \ar[r] \ar[u] & \delta-(2,3) \ar[r] \ar[u] & \delta-(2,2)  \\
\delta-(1,7) \ar[r]    & \delta-(1,6) \ar[r] \ar[u] & \delta-(1,5) \ar[r] \ar[u] & \delta-(1,4) \ar[r] \ar[u] & \delta-(1,3) \ar[r] \ar[u] & \delta-(1,2) \ar[r] \ar[u] & \delta-(1,1)     \\
                                                                                                                                                  \\
(1,1) \ar[r]     & (1,2)   \ar[r]       & (1,3) \ar[r]         & (1,4)  \ar[r]        & (1,5) \ar[r]      & (1,6) \ar[r]         & (1,7)     \\
                 & (2,2) \ar[r] \ar[u]   & (2,3) \ar[r] \ar[u]  & (2,4) \ar[r] \ar[u]  & (2,5) \ar[r] \ar[u]  & (2,6)  \ar[u]\\
                 &                      & (3,3) \ar[r] \ar[u] & (3,4) \ar[r] \ar[u] & (3,5) \ar[u]  \\
                 &                      &                      & (4,4)  \ar[u]            }
\]

 However, we have yet to consider $\prec$-relations between $D_{2}$ and $D_{3}$ in the diagram.
 Because of the $\alpha_{0}$-coordinate, a
 root in the upper part can not be $\prec$-smaller than a root in the lower part. The following lemma specifies precisely when a root in the lower part is $\prec$-smaller than a root in the upper part.

\begin{lemma}
For $i>1$ we have $(i,j) \prec \delta - (\hat{i},\hat{j})$ if and only if $\hat{j} \leq 2n-j$, while
$(1,j) \prec \delta - (\hat{i},\hat{j})$ if and only if $\hat{j} \leq 2n-j \text{ and } \hat{i} > 1$.
\end{lemma}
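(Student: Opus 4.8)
The plan is to translate the relation $\prec$ into a system of inequalities among the coefficients of the two roots in the affine simple basis $\alpha_0,\ldots,\alpha_n$, and then to analyze these inequalities coordinate by coordinate. First I would use $\alpha_0=\delta-\alpha_{max}$ to write $\delta=\alpha_0+\alpha_{max}$, where $\alpha_{max}=\alpha_1+2\alpha_2+\cdots+2\alpha_n$ is the highest root of $B_n$. Since the finite roots $(i,j)$ and $(\hat i,\hat j)$ have no $\alpha_0$-component, the difference $(\delta-(\hat i,\hat j))-(i,j)$ has $\alpha_0$-coefficient equal to $1$, and its $\alpha_\ell$-coefficient (for $1\le \ell\le n$) equals $m_\ell-c_\ell(i,j)-c_\ell(\hat i,\hat j)$, where $m_1=1$, $m_\ell=2$ for $\ell\ge 2$, and $c_\ell(\mu)$ denotes the $\alpha_\ell$-coefficient of a root $\mu$. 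Hence $(i,j)\prec\delta-(\hat i,\hat j)$ holds if and only if $c_\ell(i,j)+c_\ell(\hat i,\hat j)\le m_\ell$ for every $\ell\in\{1,\ldots,n\}$.

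Next I would record the coefficients explicitly from the two families of positive roots of $B_n$. A short computation using the identification of $(r,s)$ with its simple-basis expansion gives, for any root $(r,s)$ with $1\le r\le s\le 2n-r$: the value $c_1(r,s)$ equals $1$ exactly when $r=1$ (and $0$ otherwise); one has $c_\ell(r,s)\ge 1$ exactly when $r\le\ell\le s$; and $c_\ell(r,s)=2$ exactly when $\ell>2n-s$ (for $2\le\ell\le n$). The coordinate $\ell=1$ then contributes only the constraint that $(i,j)$ and $(\hat i,\hat j)$ are not both of the form $(1,\cdot)$, i.e.\ $i>1$ or $\hat i>1$. This is vacuous when $i>1$ and produces precisely the extra requirement $\hat i>1$ in the remaining case $i=1$.

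The real work lies in the coordinates $\ell\ge 2$, where the bound $c_\ell(i,j)+c_\ell(\hat i,\hat j)\le 2$ fails exactly when both coefficients are $\ge 1$ and at least one equals $2$. Using the formulas above, a violating $\ell$ exists iff there is some $\ell\in\{2,\ldots,n\}$ with $\ell>2n-j$ and $\hat i\le\ell\le\hat j$, or symmetrically with the roles of $(i,j)$ and $(\hat i,\hat j)$ interchanged. The main obstacle is to prove that such a witness exists if and only if $j+\hat j>2n$. Necessity is immediate, since $\ell>2n-j$ together with $\ell\le\hat j$ forces $2n-j<\hat j$. For sufficiency I would exhibit an explicit witness in the first case (which requires $j>n$): the index $\ell=2n-j+1$ works whenever $\hat i\le 2n-j+1$, and otherwise $\ell=\hat i$ works, and in each case one verifies $\ell\in\{2,\ldots,n\}$ using the root inequalities $1\le r\le s\le 2n-r$ (in particular $j\le 2n-1$ and $\hat i\le\hat j$).

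Finally I would assemble the pieces. The condition $j+\hat j\le 2n$ is symmetric under interchanging the two roots, and whenever $j+\hat j>2n$ at least one of $j,\hat j$ exceeds $n$, so the two symmetric cases together cover every violation; consequently the coordinates $\ell\ge 2$ impose no violation precisely when $\hat j\le 2n-j$. Combining this with the $\ell=1$ analysis yields exactly the two stated cases: for $i>1$ the relation $(i,j)\prec\delta-(\hat i,\hat j)$ is equivalent to $\hat j\le 2n-j$, while for $i=1$ it is equivalent to $\hat j\le 2n-j$ together with $\hat i>1$.
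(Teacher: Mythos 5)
Your proof is correct and follows essentially the same route as the paper's: both reduce $(i,j)\prec\delta-(\hat i,\hat j)$ to the coordinatewise condition $c_\ell(i,j)+c_\ell(\hat i,\hat j)\le m_\ell$ against $\alpha_{max}=\alpha_1+2\alpha_2+\cdots+2\alpha_n$ and then locate the coordinates where the sum could overflow, with the $\alpha_1$-coordinate accounting for the extra condition $\hat i>1$ when $i=1$. Your treatment of the $\ell\ge 2$ coordinates via explicit witnesses is somewhat more systematic than the paper's case split on $j\ge n$ versus $j\le n$, but it is the same underlying argument.
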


\begin{proof} We have $(i,j) \prec \delta - (\hat{i},\hat{j}) \Leftrightarrow (i,j)+(\hat{i},\hat{j}) \prec \alpha_{0} + \alpha_{max}
 \Leftrightarrow (i,j)+(\hat{i},\hat{j}) \prec \alpha_{max}$ (note that $(i,j)+(\hat{i},\hat{j})$ does not have to be a root).
 Thus, for a given root $(i,j)$, we need only consider which roots $(\hat{i},\hat{j})$
 can be added such that the sum is still $\prec$-smaller than $\alpha_{max} = \alpha_{1} + 2\alpha_{2} + 2\alpha_{3} + \cdots + 2 \alpha_{n} = (1,2,2, \ldots ,2,2)$.
 First, if $j \geq n$, both $(i,j)$ and $(\hat{i},\hat{j})$ have the form $(0,0, \ldots ,0,0,1,1, \ldots ,1,1,2,2, \ldots ,2,2)$ in the basis of simple roots.
 Moreover, for $(i,j)$ the number of $2$'s at the end equals $j-n$.  The sum $(i,j) + (\hat{i},\hat{j})$ is not allowed to have
 any coordinates equal to $3$, which implies $\hat{j} \leq n-(j-n) \Leftrightarrow \hat{j} \leq 2n-j$.
 If $j \leq n$, the root $(i,j)$ has the form $(0,0, \ldots ,0,0,1,1, \ldots ,1,1,0,0, \ldots ,0,0)$
 where the number of
 $0$'s in the end equals $n-j$, so similar arguments imply that $(\hat{i},\hat{j})$ can have at most $n-j$ coordinates equal to $2$ at the end, or equivalently
 $\hat{j} \leq n + (n-j) \Leftrightarrow \hat{j} \leq 2n-j$.

Thus a necessary condition for $(i,j) \prec \delta - (\hat{i},\hat{j})$ is that $\hat{j} \leq 2n-j$. This is also sufficient \emph{except} for the
 special case when $i=1$. In that case, since the $\alpha_{1}$-coefficient in $\alpha_{max}$ and $(i,j)$ is $1$, we can not have $\hat{i}=1$.
 Taken together, this proves the claim.
\end{proof}

Let us translate this result to our diagram above. Since the $(i,j)$'th position in the upper part contains the root $\delta - (n-i+1,2n-j)$, any element in
 the $j$'th column (except the one in the first row) at the lower part is $\prec$-smaller than all elements in the $j$'th column in the upper part.
 The root $(1,j)$ in the first row and $j$'th column is $\prec$-smaller than all elements in the $j$'th column above, except for the one in the bottom row.
 For example, our $\tilde{B_{4}}$ diagram is now completed as follows:
\[
    \xymatrix@C=4mm@R=5mm{ 
                 &                     &                     & \delta-(4,4)               &                      &                      &  \\
                 &                      & \delta-(3,5) \ar[r]        & \delta-(3,4) \ar[r] \ar[u] & \delta-(3,3)      \\
                 & \delta-(2,6) \ar[r]        & \delta-(2,5) \ar[r] \ar[u] & \delta-(2,4) \ar[r] \ar[u] & \delta-(2,3) \ar[r] \ar[u] & \delta-(2,2)  \\
\delta-(1,7) \ar[r]    & \delta-(1,6) \ar[r] \ar[u] & \delta-(1,5) \ar[r] \ar[u] & \delta-(1,4) \ar[r] \ar[u] & \delta-(1,3) \ar[r] \ar[u] & \delta-(1,2) \ar[r] \ar[u] & \delta-(1,1)     \\
                                                                                                                                                  \\
(1,1) \ar[r]     & (1,2)   \ar[r]  \ar@/^/[uuu]      & (1,3) \ar[r]   \ar@/^/[uuu]      & (1,4)  \ar[r]  \ar@/^/[uuu]      & (1,5) \ar[r] \ar@/^/[uuu]     & (1,6) \ar[r]  \ar@/^/[uuu]       & (1,7)     \\
                 & (2,2) \ar[r] \ar[u]  \ar@/_/[uuu]  & (2,3) \ar[r] \ar[u]  \ar@/_/[uuu] & (2,4) \ar[r] \ar[u]  \ar@/_/[uuu]  & (2,5) \ar[r] \ar[u]  \ar@/_/[uuu] & (2,6)  \ar[u]  \ar@/_/[uuu] \\
                 &                      & (3,3) \ar[r] \ar[u] & (3,4) \ar[r] \ar[u] & (3,5) \ar[u]  \\
                 &                      &                      & (4,4)  \ar[u]            }
\]
Here again, a root is $\prec$-smaller than another root if and only if there is a path of arrows from the first one to the second one.
Note that the top row of the bottom part is linked to the second to bottom row in the upper part and vice versa.

Our goal is now the express the number of coideals in such a poset (for an arbitrary $n$) in terms of pairs of paths. To this end, construct again
 a rectangle in $\mathbb{R}^{2}$,
 this time with the northwest corner in $(0,n+1)$ and the southeast corner in $(2n-1,-n)$, where $1 \times 1$-blocks correspond to roots of $D \setminus \{\delta\}$. The rectangle contains the two triangles in a natural way
 as illustrated below in the case $n=4$.
\[
\begin{tikzpicture}
  \draw (0,1) -- (1,1) -- (2,1) -- (3,1) -- (4,1) -- (5,1) -- (6,1) -- (7,1) 
      -- (7,2) -- (6,2) -- (6,3) -- (5,3) -- (5,4) -- (4,4) -- (4,5) -- (3,5) 
       -- (3,4) -- (2,4) -- (2,3) -- (1,3) -- (1,2) -- (0,2) --cycle;

  \draw (0,0) -- (1,0) -- (2,0) -- (3,0) -- (4,0) -- (5,0) -- (6,0) -- (7,0) 
      -- (7,-1) -- (6,-1) -- (6,-2) -- (5,-2) -- (5,-3) -- (4,-3) -- (4,-4) -- (3,-4) 
       -- (3,-3) -- (2,-3) -- (2,-2) -- (1,-2) -- (1,-1) -- (0,-1) --cycle;
  \draw[very thick,color=red] (-1,0) -- (0,0) -- (1,0) -- (2,0) -- (2,-1) -- (3,-1) -- (3,-2) -- (4,-2) -- (5,-2);
  \draw[very thick,color=blue] (8,1) -- (7,1) -- (6,1) -- (5,1) -- (4,1) -- (3,1)  -- (2,1)  -- (2,2)  -- (2,3);
  \node (a) at (2.5,-0.5) {(1)};
  \node (b) at (2.5,2.5) {(2)};
    \draw[dashed] (2,0) -- (3,0) -- (3,-1) -- (2,-1) -- cycle;
    \draw[dashed] (2,2) -- (3,2) -- (3,3) -- (2,3) -- cycle;

\end{tikzpicture}
\]
We have drawn two paths in the picture,  a red one and a blue one. These paths together specify a coideal of the poset in the following way:
 we take all roots corresponding to boxes to the northeast of the blue path together with all roots corresponding to boxes from the lower triangle
 to the northeast of the red path. 
 Every coideal can be obtained this way, but the converse is not true, some choices of paths do not correspond to coideals. For example, in the above diagram, since the red path specifies that the root in box marked by $(1)$ belongs to the coideal, the root in the box
 marked by $(2)$ in the top triangle should be in the coideal as well.
 Thus the blue path can not be taken  ``too high'' given the red path. The choice in the picture above indeed does corresponds to a coideal.
 A pair of paths which do correspond to a coideal will be called an \emph{admissible} pair of paths.

\section{Admissible pairs of paths}
\label{s5}
We now formalize the correspondence between basic ideals in $\mathfrak{b}_{n}$ and admissible pairs
 of $B_{n}$-paths by generalizing
 what was done in Proposition~\ref{pathprop}.

Let $\mathfrak{i}$ be a basic ideal in $\mathfrak{b}_{n}$.
 Let $D_{\mathfrak{i}} :=D \cap supp(\mathfrak{i})$ where $D = D_{1} \cup D_{2} \cup D_{3}$ as in Section~\ref{s4}.
 Then $D_{\mathfrak{i}} = (D_{1}\cap supp(\mathfrak{i})) \cup (D_{2}\cap supp(\mathfrak{i})) \cup (D_{3} \cap supp(\mathfrak{i}))$.
 Since $\mathfrak{i}$ is assumed to be basic, and $\delta$ is the maximum element in the poset $D$, we have
 $D_{1}\cap supp(\mathfrak{i}) = \{\delta\} \cap supp(\mathfrak{i}) = \{\delta\}$. So the sets $D_{2}\cap supp(\mathfrak{i})$ and $D_{3} \cap supp(\mathfrak{i})$ determine the basic ideal.
Now, consider again the geometric scheme discussed above in which we have a rectangle in $\mathbb{R}^{2}$ with northwest corner in $(0,n+1)$ and the southeast corner in $(2n-1,-n)$
 which contains $1\times 1$-boxes corresponding to elements of $D \setminus \{\delta\}$.
 The basic ideal $\mathfrak{i}$ corresponds to a subset of such boxes (the empty subset in case $supp(\mathfrak{i}) \cap D = \{ \delta \}$).
 By the same arguments as in Proposition~\ref{pathprop} it follows that there exists a unique pair of paths $(p,q)$ which has the following properties.
 \begin{itemize}
  \item  $p$ starts at $(-1,0)$;
  \item each step of $p$ goes to the right (along $(1,0)$) or down (along $(0,-1)$);
  \item $p$ terminates at a point of the form $(2n-1-x,-x)$, $x \in \{0,1, \ldots , n\}$;
  \item $p$ separates all boxes corresponding to elements of $D_{2} \cap supp(\mathfrak{i})$ from boxes corresponding to elements of $D_{2} \setminus supp(\mathfrak{i})$.
  \item  $q$ starts at $(2n,1)$;
  \item each step of $q$ goes to the left (along $(-1,0)$) or up (along $(0,1)$);
  \item $q$ terminates at a point of the form $(y,y+1)$, $y \in \{0,1, \ldots , n\}$;
  \item $q$ separates all boxes corresponding to elements of $D_{3} \cap supp(\mathfrak{i})$ from boxes corresponding to elements of $D_{3} \setminus supp(\mathfrak{i})$.
 \end{itemize}

By substituting each $(1,0)$-step in $p$ with $r$ and each $(0,-1)$-step in $p$ with $f$ we obtain a $B_{n}$-path $p(\mathfrak{i})$, and by substituting
each $(-1,0)$-step in $q$ with $r$ and each $(0,1)$-step in $q$ with $f$ we obtain another $B_{n}$-path $q(\mathfrak{i})$. Thus, with the basic ideal $\mathfrak{i}$ we have associated
 a pair of $B_{n}$-paths $(p(\mathfrak{i}),q(\mathfrak{i}))$. It is also clear that different basic ideals give rise to different pairs of paths.

However, as mentioned before, not every pair $(p,q)$ corresponds to a basic ideal. All pairs of the form $(p(\mathfrak{i}),q(\mathfrak{i}))$ for some basic ideal $\mathfrak{i}$ are called \emph{admissible}.
 For example, the pair $(rrrfrfrr,rrrrrrff)$
 is admissible; it is precisely the pair from the previous picture.

We now specify the conditions for a pair $(p,q)$ of $B_{n}$-paths to be admissible.

For $1 \leq i < j \leq 2n$ define  $\mathfrak{B}_{n}(i,j)$ as the set of all $B_{n}$-paths in which the first and the second
 occurrences of $f$ (reading the word from left to right) are at the positions $i$ and $j$.
 Also, for $1 \leq i \leq 2n$ let  $\mathfrak{B}_{n}(i)$ be the set of paths where the first occurrence of $f$ is at position $i$. 
For example $rrrfrfff \in \mathfrak{B}_{4}(4,6) \subset \mathfrak{B}_{4}(4)$ and $rrrfff \in \mathfrak{B}_{3}(4,5) \subset \mathfrak{B}_{3}(4)$.
 Note that $B_{n}$ paths also are defined for $n=1$, and we have $|\mathfrak{B}_{1}(2)|=|\mathfrak{B}_{1}(3)|=1$. We shall also abuse notation
 by defining $\{rr \cdots rr\} =  \mathfrak{B}_{n}(2n+1)$, that is, if a path contains no $f$, its first $f$ can be thought of to be one step after the last letter of the path.
 Similarly, write $\{rr \cdots rr\} = \mathfrak{B}_{n}(2n+1,2n+2)$ and $rr \cdots rfr \cdots rr  \in \mathfrak{B}_{n}(k,2n+1)$ (here the only occurrence of $f$ is at place $k$).

Using the notation above we can state the following criterion for a pair of $B_{n}$-paths $(p,q)$ to be admissible.

\begin{prop}
 Let $p \in \mathfrak{B}_{n}(a,b)$ where $3 \leq a < b \leq 2n$. Then $(p,q)$ is an admissible pair of paths if and only if $q \in \mathfrak{B}_{n}(i,j)$
 where $i \geq 2n+4-b$ and $j \geq 2n+4-a$.
\end{prop}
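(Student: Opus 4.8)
The plan is to translate both paths into explicit descriptions of the supported boxes in the lower and upper triangles, and then read off the admissibility condition from the cross-relations established in the previous lemma. First I would unwind the geometry of $p$. Since $p\in\mathfrak{B}_n(a,b)$ starts at $(-1,0)$ and makes its first downward step at letter $a$ and its second at letter $b$, a direct computation of the path coordinates shows that the lower supported boxes in row $1$ are exactly the $(1,j)$ with $j\ge a-1$, and those in row $2$ are exactly the $(2,j)$ with $j\ge b-2$. Because a coideal-path only moves right and down, the leftmost supported column is non-decreasing as one descends, so rows $\ge 3$ never reach a column smaller than $b-2$. I would do the analogous bookkeeping for $q$: reading $q\in\mathfrak{B}_n(i,j)$ from its start $(2n,1)$ with left-steps $r$ and up-steps $f$, the supported boxes of $D_3$ in the bottom row are the $\delta-(1,\hat j)$ with $\hat j\le i-2$, and those in the second row are the $\delta-(2,\hat j)$ with $\hat j\le j-3$.

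Next I would compute the set of upper roots that the lower coideal forces, using the previous lemma. The rule for rows with first index $>1$ is $\hat j\le 2n-j$, so the strongest constraint from rows $\ge 2$ comes from the smallest occupied column, namely $b-2$ in row $2$; this forces every $\delta-(\hat i,\hat j)$ with $\hat j\le 2n-b+2$. Row $1$ contributes its special rule, forcing every $\delta-(\hat i,\hat j)$ with $\hat i\ge 2$ and $\hat j\le 2n-a+1$. Since $a<b$ gives $2n-a+1\ge 2n-b+2$, the combined forced set is: all $\delta-(1,\hat j)$ with $\hat j\le 2n-b+2$, together with all $\delta-(\hat i,\hat j)$ with $\hat i\ge 2$ and $\hat j\le 2n-a+1$.

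Admissibility means precisely that the upper coideal determined by $q$ contains this forced set. Matching the bottom row gives $i-2\ge 2n-b+2$, i.e. $i\ge 2n+4-b$, and matching the second row gives $j-3\ge 2n-a+1$, i.e. $j\ge 2n+4-a$; these are exactly the two asserted inequalities, and they are necessary because the boundary roots $\delta-(1,2n-b+2)$ and $\delta-(2,2n-a+1)$ themselves lie in the forced set. To see that the two conditions are also \emph{sufficient}—that nothing further is required from rows $\hat i\ge 3$ of the upper triangle—I would invoke upward-closure of the coideal determined by $q$: for $\hat i\ge 3$ one has $(\hat i,\hat j)\prec(2,\hat j)$, hence $\delta-(\hat i,\hat j)\succ\delta-(2,\hat j)$, so once $\delta-(2,\hat j)$ lies in the support every $\delta-(\hat i,\hat j)$ above it does too. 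Thus controlling the second row controls all higher rows, which is why only the first two occurrences of $f$ in each of $p$ and $q$ enter the criterion.

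The main obstacle I anticipate is the careful coordinate bookkeeping—pinning down the off-by-one shifts ($a-1$, $b-2$, $i-2$, $j-3$) relative to the starting points $(-1,0)$ and $(2n,1)$—together with checking that the boundary roots $(1,2n-b+2)$ and $(2,2n-a+1)$ are genuine roots of $B_n$ in the range $3\le a<b\le 2n$; it is this validity, in particular the inequality $2n-a+1\le 2n-2$, that makes the clean cutoffs hold and explains the hypothesis $a\ge 3$. Once this is in place, the conceptual content—that only the first two descents of each path matter—follows from the monotonicity of coideal-paths and the closure argument above.
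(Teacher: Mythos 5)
Your proposal is correct and follows essentially the same route as the paper: both reduce admissibility to the cross-relations of the preceding lemma, observe that only the first two rows of the lower triangle impose constraints (with everything in rows $\hat{i}\ge 3$ following by upward closure), and translate the two critical forced roots coming from $(1,a-1)$ and $(2,b-2)$ into the inequalities $i\ge 2n+4-b$ and $j\ge 2n+4-a$. Yours simply spells out the coordinate bookkeeping and the full forced set more explicitly than the paper's two-box argument.
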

\begin{proof}
 Let $3 \leq a < b \leq 2n$ and $p \in \mathfrak{B}_{n}(a,b)$. Realize $(p,q)$ as a pair of paths in the diagram described above.
 By Proposition~\ref{pathprop}, the path $p$ specifies a coideal of the poset $D_{2}$ and $q$ specifies a coideal of the poset $D_{3}$, so we need only consider $\prec$-arrows between the two triangles.
 Such arrows originate from the two upper rows of the lower diagram.
 The root corresponding to the box whose northwest corner is in $(a-2,0)$ is in the coideal, and it $( \prec )$-covers                                                         
 the root corresponding to the box with northwest corner in $(a-2,3)$, so this latter root must
 also belong to the coideal. Similarly, since the root corresponding to the box with northwest corner $(b-3,-1)$ is in the coideal, the root corresponding to the box with
 northwest corner in $(b-3,2)$ must also be in the coideal. Thus, for $p$ as above, $(p,q)$ is admissible if and only if
 the two boxes with northwest corners $(a-2,3)$ and $(b-3,2)$ respectively lie to the northwest
 of the path $q$. This is equivalent to $q \in \mathfrak{B}_{n}(i,j)$ with $i \geq 2n+4-b$ and $j \geq 2n+4-a$.
\end{proof}

We illustrate the proposition above with an example.

\begin{example}
Let $n=6$ and let $p=rrrrfrrfrffr \in \mathfrak{B}_{6}(5,8)$. The path $p$ is drawn in red in the bottom triangle of the following picture.
\[
\begin{tikzpicture}
  \draw (0,0) -- (1,0) -- (2,0) -- (3,0) -- (4,0) -- (5,0) -- (6,0) -- (7,0) -- (8,0) -- (9,0) -- (10,0) -- (11,0)
 -- (11,-1) -- (10,-1) -- (10,-2) -- (9,-2) -- (9,-3) -- (8,-3) -- (8,-4) -- (7,-4) -- (7,-5) -- (6,-5) -- (6,-6) -- (5,-6) -- (5,-5) -- (4,-5) -- (4,-4)
 -- (3,-4)  -- (3,-3) -- (2,-3) -- (2,-2) -- (1,-2) -- (1,-1) -- (0,-1) --cycle;

  \draw (0,1) -- (1,1) -- (2,1) -- (3,1) -- (4,1) -- (5,1) -- (6,1) -- (7,1) -- (8,1) -- (9,1) -- (10,1) -- (11,1)
 -- (11,2) -- (10,2) -- (10,3) -- (9,3) -- (9,4) -- (8,4) -- (8,5) -- (7,5) -- (7,6) -- (6,6) -- (6,7) -- (5,7) -- (5,6) -- (4,6) -- (4,5)
 -- (3,5)  -- (3,4) -- (2,4) -- (2,3) -- (1,3) -- (1,2) -- (0,2) --cycle;

  \draw[very thick,color=red]  (-1,0) -- (0,0) -- (1,0) -- (2,0) -- (3,0) -- (3,-1) -- (4,-1) -- (5,-1) -- (5,-2) -- (6,-2) 
     -- (6,-3) -- (6,-4) -- (7,-4);
  \draw[very thick,color=blue]  (12,1) -- (4,1) -- (4,2) -- (1,2);

  \draw[color=black,dashed]  (4,2) -- (3,2) -- (3,3) -- (4,3) -- cycle;
  \draw[color=black, dashed]  (6,1) -- (5,1) -- (5,2) -- (6,2) -- cycle;
  \draw[color=black,dashed]  (4,-1) -- (3,-1) -- (3,0) -- (4,0) -- cycle;
  \draw[color=black, dashed]  (6,-2) -- (5,-2) -- (5,-1) -- (6,-1) -- cycle;

  \node (a) at (3.5,-0.5) {(1)};
  \node (b) at (3.5,2.5) {(1')};
  \node (c) at (5.5,-1.5) {(2)};
  \node (d) at (5.5,1.5) {(2')};
\end{tikzpicture}
\]
The box marked by $(1)$ is linked to the box marked $(1')$ linked via the $\prec$-relation, and the box marked by $(2)$ is linked to the box marked $(2')$ linked via the $\prec$-relation.
 Hence $q$ must be chosen such that the boxes marked $(1')$ and $(2')$ are both to the northeast of the path $q$.
 In other words, $q \in \mathfrak{B}_{6}(i,j)$ where $i \geq 2n+4-b=8$ and $j \geq 2n+4-a=11$. The smallest coideal allowed for this choice of the path $p$ is of course given by
 $q=(rrrrrrrfrrff)$, and the largest ideal corresponds to when $q=rrrrrrrrrrrr$. All paths ``between'' these also give admissible pairs $(p,q)$. One such choice, $q=rrrrrrrrfrrr$,
 is drawn in blue color in the picture.
\end{example}

\begin{cor}
 For $p \in \mathfrak{B}_{n}(a,b)$ where $3 \leq a < b \leq 2n$, the number of $q$'s such that $(p,q)$ is admissible is given by
 \[ \sum_{i \geq 2n+4-b} \,\, \sum_{j \geq 2n+4-a} |\mathfrak{B}_{n}(i,j)|.\]
\end{cor}

Comparing with the picture of the poset though, we note that the argument above does not make sense when $a=2$ or when $b=2n+1$, so unfortunately we need
 to consider a few special cases as well.

\section{Counting paths}
\label{s6}
In this section we prove Theorem~\ref{thmmain}.
We start with the following lemma.

\begin{lemma}
\label{lemma1}
For $n \geq 2$ and $2 \leq i < j \leq 2n+1$ we have
\[|\mathfrak{B}_{n}(i,j)| = |\mathfrak{B}_{n-1}(j-2)|. \]
\end{lemma}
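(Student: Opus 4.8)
The plan is to exhibit an explicit, length-preserving bijection between $\mathfrak{B}_{n}(i,j)$ and $\mathfrak{B}_{n-1}(j-2)$ by rewriting the common ``tail'' of a path while replacing its (completely forced) prefix. First I would record the canonical shape of the elements on each side. Since the first two occurrences of $f$ in a path $p \in \mathfrak{B}_{n}(i,j)$ are pinned at positions $i$ and $j$, every letter before position $j$ is determined, so $p = r^{i-1} f\, r^{\,j-1-i} f\, w$ for a unique word $w \in \{r,f\}^{2n-j}$, and conversely any admissible $w$ yields such a $p$. Likewise every $q \in \mathfrak{B}_{n-1}(j-2)$ has the form $q = r^{\,j-3} f\, w'$ with $w' \in \{r,f\}^{(2n-2)-(j-2)} = \{r,f\}^{2n-j}$. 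The two tails have the same length $2n-j$, so the candidate bijection is simply ``keep the tail, swap the prefix'': $r^{i-1} f\, r^{\,j-1-i} f\, w \longleftrightarrow r^{\,j-3} f\, w$.

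The heart of the matter — and the step I expect to require the most care — is proving that a tail $w$ is admissible on one side exactly when it is admissible on the other, i.e.\ that the two prefix conditions coincide. The key observation is a bookkeeping one: the block $r^{i-1} f\, r^{\,j-1-i} f$ carries $(j-2)-2 = j-4$ more $r$'s than $f$'s, while $r^{\,j-3} f$ carries $(j-3)-1 = j-4$ more. Because these surpluses agree, the running excess of $r$'s over $f$'s entering the tail $w$ is the same in both pictures, so the requirement ``every prefix of the whole word has at least as many $r$'s as $f$'s'' imposes one and the same condition on $w$ in the two cases. It then remains to check that the two fixed prefixes are themselves legal: $r^{i-1} f\, r^{\,j-1-i} f$ is legal iff $i-2 \geq 0$ and $j-4 \geq 0$, whereas $r^{\,j-3} f$ is legal iff $j-4 \geq 0$; since $i \geq 2$ is assumed, both reduce to $j \geq 4$. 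Hence for $j \geq 4$ the swap-map is a bijection and $|\mathfrak{B}_{n}(i,j)| = |\mathfrak{B}_{n-1}(j-2)|$.

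Finally I would dispose of the low and boundary values by hand. When $j=3$ (which forces $i=2$) both prefixes fail the surplus test, so $\mathfrak{B}_{n}(2,3) = \varnothing = \mathfrak{B}_{n-1}(1)$ and the identity holds trivially. The convention value $j = 2n+1$ lies outside the tail-rewriting formula, since there the tail would have negative length; I would handle it directly, noting that $\mathfrak{B}_{n}(i,2n+1)$ consists of the single path $r^{i-1} f\, r^{\,2n-i}$, while $\mathfrak{B}_{n-1}(2n-1) = \mathfrak{B}_{n-1}(2(n-1)+1)$ is, by the stated convention, the single all-$r$ word, so both sides have cardinality $1$. The main obstacle is thus not constructing the bijection but reconciling the prefix-legality constraints with the various boundary conventions for the first and second occurrences of $f$.
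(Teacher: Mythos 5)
Your map $r^{i-1}f\,r^{\,j-1-i}f\,w \mapsto r^{\,j-3}f\,w$ is exactly the paper's bijection (which is described as ``delete the first $r$ and the first $f$''), and your surplus-bookkeeping argument correctly verifies what the paper leaves as ``easy to check.'' The proof is correct and takes essentially the same approach, with the boundary cases $j=3$ and $j=2n+1$ handled explicitly rather than absorbed into the convention.
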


\begin{proof}
For such $i,j$ and $n$, define a map $\phi:\mathfrak{B}_{n}(i,j) \rightarrow \mathfrak{B}_{n-1}(j-2)$ which just
 deletes the first (leftmost) $r$ and the first $f$ from a given path.
 It is easy to check that $\phi(p) \in \mathfrak{B}_{n-1}(j-2)$ and that $\phi$ is a bijection (its inverse just appends an $r$ to the
 left and inserts an $f$ at position $i$). Hence the sets have the same cardinality.
\end{proof}

The numbers $|\mathfrak{B}_{n}(i)|$ are given explicitly by the following lemma.
\begin{lemma}
\label{lemmax}
For $n \geq 2$ and $1 \leq i \leq 2n$ we have

\[|\mathfrak{B}_{n}(i)|=
\begin{cases}
\displaystyle \sum_{k=1}^{n-1} 2^{k} \Bigg( \binom{2n-i-k-1}{n-i} - \binom{2n-i-k-1}{n-i-k} \Bigg) ,  & \text{ if  $i \leq n$;}\\ 
\\
2^{2n-i} ,       & \text{ if $i > n$.}
\end{cases}
\]
\end{lemma}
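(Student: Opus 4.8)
The plan is to recast $\mathfrak{B}_n(i)$ as a set of constrained lattice walks and then to count those walks by a first-passage decomposition tailored to produce the stated sum. A path in $\mathfrak{B}_n(i)$ must begin $r^{i-1}f$ and is then followed by an arbitrary suffix $w$ of length $m := 2n-i$; writing each $r$ as a $+1$ step and each $f$ as a $-1$ step, the prefix condition for the whole word is equivalent to requiring that $w$, read as a $\pm 1$ walk started at height $h_0 := i-2$, never drops below $0$. Thus $|\mathfrak{B}_n(i)|$ equals the number $N(h_0,m)$ of length-$m$ walks from height $h_0$ that stay $\ge 0$, with unrestricted endpoint. Two cases are immediate. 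If $i=1$ then $h_0=-1$ and there is no valid path, so $|\mathfrak{B}_n(1)|=0$; correspondingly each summand vanishes, since at $i=1$ the two binomials $\binom{L}{n-1}$ and $\binom{L}{n-1-k}$ have complementary indices ($L=2n-k-2$) and are therefore equal. If $i>n$ then $h_0=i-2\ge 2n-i=m$, so even the all-down walk stays $\ge 0$ and every one of the $2^{m}$ suffixes is allowed, giving $|\mathfrak{B}_n(i)|=2^{2n-i}$. It remains to treat $2\le i\le n$, for which $0\le h_0<m$.

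For such $i$ I would decompose a walk $W$ by its \emph{escape time}: let $H_t$ denote the height after $t$ steps ($H_0=h_0$) and let $t_0$ be the first time at which the walk could descend for all of its remaining steps and still finish at height $\ge 0$, i.e.\ the least $t$ with $H_t\ge m-t$. Since $H_m\ge 0=m-m$ this is well defined, and $h_0<m$ forces $t_0\ge 1$. A short argument shows that the step entering $t_0$ must be an up-step and that $H_{t_0}=m-t_0$ exactly: if it were a down-step, or if $H_{t_0-1}<m-t_0$, one checks $H_{t_0}<m-t_0$, contradicting minimality. Setting $k:=m-t_0$, the walk factors uniquely as a prefix of length $m-k-1$ running from $h_0$ to height $k-1$ and staying $\ge 0$, followed by the forced up-step into height $k$, followed by a suffix of length $k$. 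Because this suffix starts at height $k$ and has only $k$ steps it can never go below $0$, so it is \emph{arbitrary}, contributing a factor $2^{k}$.

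The one point that must be checked for the factorization to be a genuine bijection is that an \emph{arbitrary} nonnegative prefix from $h_0$ to $k-1$ of length $m-k-1$ automatically has escape time larger than its own length, so that gluing it to the forced up-step and a free suffix really reproduces a walk whose escape time is $t_0=m-k$. This follows from the bound $H_t\le (k-1)+\big((m-k-1)-t\big)=m-t-2<m-t$, valid at every prefix position $t$, which keeps the prefix strictly below the escape diagonal. Granting this, I would count the prefixes by the reflection principle: unrestricted length-$(m-k-1)$ walks from $h_0=i-2$ to $k-1$ number $\binom{2n-i-k-1}{n-i}$ (there are $n-i$ up-steps), while those touching $-1$ are counted, after reflecting the start across $-1$, by $\binom{2n-i-k-1}{n-1}$. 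Rewriting the latter via $\binom{L}{n-1}=\binom{L}{L-(n-1)}=\binom{2n-i-k-1}{\,n-i-k\,}$ puts the prefix count in exactly the bracketed form of the lemma. Summing over $k$ and noting that $\binom{2n-i-k-1}{n-i}=0$ once $k\ge n$ truncates the range to $1\le k\le n-1$, yielding the stated formula.

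The main obstacle is the bijection bookkeeping of the preceding paragraph rather than any hard estimate: one has to pin down the escape time so that the ``forced up-step plus free tail'' is canonical and, crucially, verify via the height bound $H_t\le m-t-2$ that no prefix accidentally escapes early, for otherwise the reflection count and the decomposition would disagree. The reflection computation and the binomial-symmetry rewriting are routine once the decomposition is set up correctly; I would sanity-check the whole scheme on a small case such as $n=4,\ i=2$, where it should give $4+8+8=20=\binom{6}{3}$.
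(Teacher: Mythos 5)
Your proof is correct, and at its core it uses the same decomposition as the paper: the factor $2^k$ arises by freeing the final descent of length $k$ of a Dyck path whose last peak is at height $k$, and your escape time $t_0$ is exactly the position after which that descent begins. The genuine difference lies in how the two halves are justified. The paper asserts without further argument that every path of $\mathfrak{B}_n(i)$ arises uniquely in this way and then cites Baur--Mazorchuk (Proposition~13 of their paper) for the number of Dyck paths of semilength $n$ with prescribed first and last peak heights; you instead make the bijection explicit (the forced up-step at time $t_0$, and the bound $H_t\le m-t-2$ guaranteeing that a reconstructed walk cannot escape early) and derive the prefix count $\binom{2n-i-k-1}{n-i}-\binom{2n-i-k-1}{n-i-k}$ from scratch by the reflection principle. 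This buys a self-contained argument and lands directly on the binomials in the statement, whereas the paper's reader must substitute ``first peak at height $i-1$'' into the quoted formula to see the match. One point to tighten: your ``short argument'' that $H_{t_0}=m-t_0$ exactly does not close as phrased, since an up-step from $H_{t_0-1}=m-t_0$ would give $H_{t_0}=m-t_0+1$ without violating minimality of $t_0$; you need the parity observation that $H_t\equiv m-t\pmod 2$ for all $t$ (which holds because $h_0+m=2n-2$ is even) to exclude this case. With that line added, the decomposition is a genuine bijection, and your treatment of the boundary cases $i=1$, $i>n$ and of the truncation of the sum at $k=n-1$ is correct.
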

\begin{proof}

Each path of $\mathfrak{B}_{n}(i)$ can be specified by first choosing a Dyck-path starting with $r^{i-1}f$, and then replacing
 its tail consisting of some number of $f$'s by any sequence of $r$ and $f$ of the same length. Baur and Mazorchuk~\cite[Proposition~13]{BaMa} showed that the number of Dyck-paths of semilength $n$ with the first peak at height $i$
 and the last peak at height $k$ is given by the expression \[ \binom{2n-i-k-2}{n-i-1} - \binom{2n-i-k-2}{n-i-k-1}. \] Since a Dyck-path having the last peak at height $k$ ends with $f^{k}$, it gives rise to $2^{k}$
 possible $B$-paths. Summing over $k$ we obtain the number of all such $B$-paths. When $i > n$, this argument degenerates, but a path starting with $r^{i-1}f$ with $i > n$ can clearly be completed by choosing the remaining symbols
 arbitrarily, which gives $2^{2n-i}$ possibilities. Here is a geometric illustration.
\[
\begin{tikzpicture}
  \draw (0,0) -- (1,0) -- (2,0) -- (3,0) -- (4,0) -- (5,0) -- (6,0) -- (7,0) -- (8,0) -- (9,0) 
 -- (9,-1) -- (8,-1) -- (8,-2) -- (7,-2) -- (7,-3) -- (6,-3) -- (6,-4) -- (5,-4) -- (5,-5) -- (4,-5) -- (4,-4)
 -- (3,-4)  -- (3,-3) -- (2,-3) -- (2,-2) -- (1,-2) -- (1,-1) -- (0,-1) --cycle;
  \draw (7,0) -- (8,0) -- (9,0); 
  \draw[very thick,color=red] (-1,0) -- (0,0) -- (0,-1) -- (1,-1) -- (2,-1) -- (3,-1) -- (3,-2) -- (4,-2) -- (4,-3) -- (4,-4) -- (4,-5);
  \draw[very thick,color=red, dashed] (4,-2)--(5,-2) -- (5,-3) -- (6,-3);
\end{tikzpicture}
\]
The red path above corresponds to the Dyck-path $rfrrrfrfff$. It can be modified to a $B$-path by replacing the three $f$'s at the end by any three letter word on $\{r,f\}$.
 There are $2^{3}$ such choices, the choice $rfr$ gives the $B$-path $rfrrrfrrfr$ which is displayed as a dashed path in the figure.
\end{proof}

To proceed we need a number of lemmas.

\subsection{Lemmata}
\begin{lemma}
\label{lemmac2}
For integers $n\geq 0$ we have \[\sum_{k=0}^{n} k 2^{n-k} = 2^{n+1}-n-2\]
\end{lemma}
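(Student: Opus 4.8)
The plan is to prove the identity by induction on $n$, since the right-hand side $2^{n+1}-n-2$ already has the shape one expects from a geometric-type sum, and the factor $2^{n-k}$ rescales cleanly when $n$ is incremented. For the base case $n=0$ the left-hand side is the single term $0\cdot 2^0=0$, and the right-hand side is $2^1-0-2=0$, so the two agree.

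For the inductive step I would assume the formula for $n$ and compute the sum for $n+1$ by splitting off the top term. Writing
\[\sum_{k=0}^{n+1} k\,2^{n+1-k} = 2\sum_{k=0}^{n} k\,2^{n-k} + (n+1),\]
where the factor of $2$ comes from pulling one power of $2$ out of the exponent $n+1-k$ over the range $0 \le k \le n$, and the term $(n+1)$ is the $k=n+1$ contribution $(n+1)2^0$. Substituting the inductive hypothesis into the first summand yields $2(2^{n+1}-n-2)+(n+1)=2^{n+2}-n-3$, which equals $2^{(n+1)+1}-(n+1)-2$, exactly the claimed formula at $n+1$.

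The only point requiring care is the index bookkeeping in the splitting step: one must notice that $2^{n+1-k}=2\cdot 2^{n-k}$ applies to the $k\le n$ part but that the top index $k=n+1$ has to be excluded from the geometric rescaling and added back separately. I do not expect any genuine obstacle; the identity is elementary and the whole argument is a short computation.

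As an alternative that avoids induction, one could interchange the order of summation. Writing $k=\sum_{j=1}^{k}1$ gives
\[\sum_{k=1}^{n} k\,2^{n-k} = \sum_{k=1}^{n}\sum_{j=1}^{k}2^{n-k} = \sum_{j=1}^{n}\sum_{k=j}^{n}2^{n-k},\]
and the inner geometric sum equals $2^{n-j+1}-1$. Summing this over $j$ from $1$ to $n$ collapses to $(2^{n+1}-2)-n$, which is the same closed form obtained directly without an inductive hypothesis.
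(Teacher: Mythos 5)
Your proof is correct and follows the same route as the paper, whose proof is simply the one-line ``Induction on $n$''; your inductive step is the intended computation, carried out explicitly. The alternative via interchanging the order of summation is also valid, but both are standard and the paper's argument is the inductive one.
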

\begin{proof}
 Induction on $n$.
\end{proof}

\begin{lemma}
\label{lemmac1}
For integers $n\geq 0$ we have \[\sum_{k=0}^{n} k^{2} 2^{n-k} = 6\cdot 2^{n}-n^{2}-4n-6\]
\end{lemma}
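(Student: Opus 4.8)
The statement to prove is the summation identity
\[\sum_{k=0}^{n} k^{2} 2^{n-k} = 6\cdot 2^{n}-n^{2}-4n-6.\]

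The plan is to prove this by induction on $n$, exactly as was done for the analogous first-moment identity in Lemma~\ref{lemmac2}. First I would check the base case $n=0$: the left-hand side is the single term $k=0$, which contributes $0$, and the right-hand side is $6\cdot 1 - 0 - 0 - 6 = 0$, so the two agree. For the inductive step, assume the formula holds for some $n\geq 0$ and consider $\sum_{k=0}^{n+1} k^{2} 2^{(n+1)-k}$. I would split off the top term $k=n+1$, which contributes $(n+1)^{2}$, and factor a $2$ out of the remaining sum to write it as $2\sum_{k=0}^{n} k^{2} 2^{n-k}$, to which the induction hypothesis applies.

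Substituting the hypothesis gives $2(6\cdot 2^{n}-n^{2}-4n-6) + (n+1)^{2}$. The remaining work is a routine algebraic simplification: the first part yields $6\cdot 2^{n+1}-2n^{2}-8n-12$, and after adding $(n+1)^{2}=n^{2}+2n+1$ the polynomial terms collapse to $-n^{2}-6n-11$. I would then verify that this matches the claimed right-hand side at $n+1$, namely $6\cdot 2^{n+1}-(n+1)^{2}-4(n+1)-6 = 6\cdot 2^{n+1}-n^{2}-6n-11$, completing the induction.

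There is no real obstacle here; the only point requiring care is the bookkeeping of the polynomial coefficients in the inductive step, where a sign or constant error would be easy to make. An alternative approach, if one prefers a non-inductive derivation, would be to apply the standard generating-function or finite-difference technique for sums of the form $\sum k^{2} x^{k}$ and specialize, or to reuse Lemma~\ref{lemmac2} together with the geometric sum $\sum_{k=0}^{n} 2^{n-k} = 2^{n+1}-1$ by writing $k^{2}$ in a suitable telescoping form; but since the preceding lemma was dispatched by a one-line induction, I expect the cleanest and most consistent presentation is simply to record that the same method applies.
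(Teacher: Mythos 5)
Your proposal is correct and matches the paper's approach exactly: the paper's proof of this lemma is simply ``Induction on $n$,'' and your base case, the split of the $k=n+1$ term, and the algebraic verification $2(6\cdot 2^{n}-n^{2}-4n-6)+(n+1)^{2}=6\cdot 2^{n+1}-n^{2}-6n-11$ all check out.
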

\begin{proof}
 Induction on $n$.
\end{proof}

\begin{lemma}
\label{lemmaCK}
For integers $ 0 \leq m \leq n$ we have \[\sum_{s=m}^{n} \binom{s}{m} = \binom{n+1}{m+1}\]
\end{lemma}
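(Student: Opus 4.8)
The plan is to prove this standard ``hockey-stick'' identity by induction on $n$ for fixed $m$, using Pascal's rule as the engine. First I would dispose of the base case $n=m$, where the left-hand side is the single term $\binom{m}{m}=1$ and the right-hand side is $\binom{m+1}{m+1}=1$, so equality holds.

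For the inductive step, I would assume $\sum_{s=m}^{n}\binom{s}{m}=\binom{n+1}{m+1}$ for some $n\geq m$ and then split off the last term of the sum for $n+1$. This gives $\sum_{s=m}^{n+1}\binom{s}{m}=\binom{n+1}{m+1}+\binom{n+1}{m}$ by the inductive hypothesis, and Pascal's rule $\binom{n+1}{m+1}+\binom{n+1}{m}=\binom{n+2}{m+1}$ then closes the induction.

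A shorter telescoping argument is also available: rewriting Pascal's rule as $\binom{s}{m}=\binom{s+1}{m+1}-\binom{s}{m+1}$ turns the sum into a telescoping one whose value is $\binom{n+1}{m+1}-\binom{m}{m+1}=\binom{n+1}{m+1}$, using $\binom{m}{m+1}=0$. For completeness I note that a purely combinatorial proof works equally well: counting the $(m+1)$-element subsets of $\{1,\ldots,n+1\}$ and classifying them by their largest element $\ell$ (whose remaining $m$ elements are drawn from $\{1,\ldots,\ell-1\}$) shows directly that $\binom{n+1}{m+1}=\sum_{s=m}^{n}\binom{s}{m}$ after the substitution $s=\ell-1$.

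There is no genuine obstacle here, as this is a classical identity. The only points requiring a little care are the base case together with the convention $\binom{m}{m}=1$, and, in the telescoping version, the vanishing of the boundary term $\binom{m}{m+1}=0$. Since the preceding lemmas of this subsection were likewise dispatched \emph{by induction}, the inductive argument above is the most natural route to record.
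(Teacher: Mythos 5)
Your proof is correct and matches the paper's approach: the paper's entire proof is the one-line remark that the identity ``follows from Pascal's rule,'' and your induction (or equivalently your telescoping rewrite of Pascal's rule) is precisely the argument that remark alludes to. The extra combinatorial proof is a nice bonus but not needed.
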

\begin{proof}
This follows from Pascal's rule.
\end{proof}

\begin{lemma}
\label{lemmaCK2}
For all positive integers $A$ and $B$ with $A \geq B$, we have \[\sum_{\ell=0}^{A-B} 2^{\ell} \binom{A-\ell}{B} = \sum_{s=0}^{A-B}\binom{A+1}{s}\]
\end{lemma}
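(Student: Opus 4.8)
The plan is to recognise both sides as functions $L(A,B)=\sum_{\ell=0}^{A-B}2^{\ell}\binom{A-\ell}{B}$ and $R(A,B)=\sum_{s=0}^{A-B}\binom{A+1}{s}$ of the two parameters, and to prove that they satisfy one and the same Pascal-type recurrence together with the same boundary data; since such data determine the function uniquely on the triangle $A\ge B\ge 0$, the two sides must coincide. I would therefore run an induction on $A$, proving the statement for all admissible $B$ simultaneously (which in particular covers the claimed range $A\ge B\ge 1$).

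The core step is to establish the common recurrence $h(A+1,B)=h(A,B)+h(A,B-1)$ for $B\ge 1$. For $L$, I would apply Pascal's rule $\binom{A+1-\ell}{B}=\binom{A-\ell}{B}+\binom{A-\ell}{B-1}$ termwise and split the sum: in the first resulting sum the top term at $\ell=A+1-B$ carries the factor $\binom{B-1}{B}=0$ and drops out, leaving exactly $L(A,B)$, while the second sum, after observing $A+1-B=A-(B-1)$, is exactly $L(A,B-1)$. For $R$, the rule $\binom{A+2}{s}=\binom{A+1}{s}+\binom{A+1}{s-1}$ splits $R(A+1,B)$ into two sums; the first is $R(A,B-1)$, and the second, after reindexing $s\mapsto s-1$ and discarding the vanishing $s=-1$ term, is $R(A,B)$. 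This shows $L$ and $R$ obey identical recurrences.

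It then remains to match the boundary data. Along $B=0$ I would evaluate $L(A,0)=\sum_{\ell=0}^{A}2^{\ell}=2^{A+1}-1$ as a geometric series, and $R(A,0)=\sum_{s=0}^{A}\binom{A+1}{s}=2^{A+1}-1$ by subtracting the single missing top term $\binom{A+1}{A+1}$ from the full binomial sum $2^{A+1}$; along the diagonal $B=A$ both expressions collapse to a single term equal to $1$. With the base case $A=0$ (forced to $B=0$) and these two boundaries in hand, the recurrence propagates the equality from $A$ to $A+1$ for every interior $B$ with $1\le B\le A$, while the new boundary values at $B=0$ and the new diagonal $B=A+1$ are supplied directly, completing the induction.

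The main obstacle I anticipate is purely the index bookkeeping in the recurrence step: one must verify that the upper summation limits transform correctly under Pascal's rule, specifically that the spurious top term of the first $L$-sum genuinely vanishes and that the reindexed $R$-sums reassemble with precisely the limits of $R(A,B)$ and $R(A,B-1)$. No combinatorial input beyond Pascal's rule and the elementary geometric and binomial sums is required, so once the two recurrences are checked to agree, the conclusion follows mechanically.
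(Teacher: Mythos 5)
Your proof is correct, but it follows a genuinely different route from the paper. The paper converts the left-hand side into the hypergeometric series $\binom{A}{B}\,{}_2F_1\bigl[\begin{smallmatrix}1,\,-A+B\\ -A\end{smallmatrix};2\bigr]$ and applies a transformation formula of Slater, which requires a careful limit $\varepsilon\to 0$ in the parameters because the relevant ${}_2F_1$ is degenerate (negative integers appear in the lower parameter); the right-hand side then emerges as the transformed series evaluated at $-1$. You instead set $L(A,B)=\sum_{\ell=0}^{A-B}2^{\ell}\binom{A-\ell}{B}$ and $R(A,B)=\sum_{s=0}^{A-B}\binom{A+1}{s}$ and show both satisfy $h(A+1,B)=h(A,B)+h(A,B-1)$ together with the boundary values $h(A,0)=2^{A+1}-1$ and $h(A,A)=1$. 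I checked the index bookkeeping you flagged as the main risk: in the $L$-recurrence the top term of the first split sum carries $\binom{B-1}{B}=0$ and the second split sum runs to $\ell=A+1-B=A-(B-1)$, which is exactly the range of $L(A,B-1)$; in the $R$-recurrence the reindexed sum picks up only the vanishing $\binom{A+1}{-1}$ term. Both recurrences and boundaries check out, and the induction on $A$ (extended to $B=0$, which is harmless and needed to seed the recurrence at $B=1$) closes the argument. Your version is entirely elementary and self-contained, needing only Pascal's rule and a geometric sum, whereas the paper's version plugs the identity into standard hypergeometric machinery at the cost of the $\varepsilon$-limit; either is a valid replacement for the other.
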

\begin{proof}
We first convert the sum on the left into standard hypergeometric notation, as given by 
\[\setlength\arraycolsep{1pt}
{}_p F_q\left[\begin{matrix}a_{1}, \ldots ,a_{p}\\
b_{1}, \ldots ,b_{q}\end{matrix};z\right] = \sum_{\ell = 0}^{\infty} \frac{(a_{1})_{\ell}, \ldots ,(a_{p})_{\ell}}{\ell !  (b_{1})_{\ell}, \ldots ,(b_{q})_{\ell}} z^{\ell},\]
 where the Pochhammer symbol $(\alpha)_{\ell}$ is defined by \[(\alpha)_{\ell} := \alpha(\alpha+1) \cdots (\alpha+\ell-1)\] for $\ell \geq 1$, and $(\alpha)_{0}:=1$.

In this notation we have \[\setlength\arraycolsep{1pt}\sum_{\ell=0}^{A-B} 2^{\ell} \binom{A-\ell}{B} = \binom{A}{B}{}_2 F_1\left[\begin{matrix}1,-A+B\\
-A\end{matrix};2\right].\]

We now apply the following transformation formula for hypergeometric functions, see Slater~\cite[(1.8.10)]{LJS}.
\[ \setlength\arraycolsep{1pt}
{}_2 F_1\left[\begin{matrix}a,-N\\
c\end{matrix};z\right] = (1-z)^{N} \frac{(a)_{N}}{(c)_{N}} {}_2 F_1\left[\begin{matrix}-N,c-a\\
1-a-N\end{matrix};\frac{1}{1-z}\right],\] where $N$ is a nonnegative integer.

Choosing $a=1 + \varepsilon$, $N=A-B$, $c=-A+\varepsilon$ and $z=2$ gives the formula
\[ \setlength\arraycolsep{1pt}
{}_2 F_1\left[\begin{matrix}1,-A+B\\
-A\end{matrix};2\right] = (-1)^{A-B} \lim_{\varepsilon \rightarrow 0}\frac{(1+\varepsilon)_{A-B}}{(-A+\varepsilon)_{A-B}} {}_2 F_1\left[\begin{matrix}-A+B,-A-1\\
-A+B-\varepsilon\end{matrix};-1\right].\]

Applying this formula to our previous expression we obtain

\begin{align*}
\setlength\arraycolsep{1pt}\sum_{\ell=0}^{A-B}& 2^{\ell} \binom{A-\ell}{B} = \binom{A}{B} \lim_{\varepsilon \rightarrow 0}\frac{(1+\varepsilon)_{A-B}}{(B+1-\varepsilon)_{A-B}} {}_2 F_1\left[\begin{matrix}-A+B,-A-1\\
-A+B-\varepsilon\end{matrix};-1\right] \\
&= \sum_{s=0}^{A-B}\frac{(-A-1)_{s}}{s!}(-1)^{s}\\
&= \sum_{s=0}^{A-B} \binom{A+1}{s}.
\end{align*}
\end{proof}

We are now ready to prove Theorem~\ref{thmmain}. The proof will occupy the remainder of the paper, so we shall divide it into a number of parts.

\emph{Layout of the proof:} The number $\tilde{b}_{n}$ equals the number of admissible pairs of $B_{n}$-paths. We shall
 count these by counting the numbers of admissible pairs $(p,q)$ where
 $p \in \mathfrak{B}(a,b)$ and $q \in \mathfrak{B}(i,j)$ and then taking the sum over all relevant such integer quadruples $(a,b,i,j)$.
 We partition these quadruples into four disjoint classes and we proceed by separately computing the number of admissible pairs belonging to each class.
 The rest of the proof is a simplification of the obtained expression.

\subsection{Case I}
Consider those $(a,b,i,j)$ where $a \geq 3$, $4 \leq b \leq 2n$ and $i \leq 2n + 1$. This is the most general case. For each such $a$ and $b$,
 we have $|\mathfrak{B}_{n}(a,b)| = |\mathfrak{B}_{n-1}(b-2)|$ choices for $p$, and for each such choice we choose $j$ between $(2n+4-a)$ and $2n+1$, and for such $a,b,j$ we have
 $(b+j-2n-4)$ ways to choose $i$, and then $|\mathfrak{B}_{n}(i,j)|=|\mathfrak{B}_{n-1}(j-2)|$ ways to complete the path. So for this case, the number
 of admissible paths can be written

\[
s_{1} = \sum_{b=4}^{2n} |\mathfrak{B}_{n-1}(b-2)| \sum_{a=3}^{b-1} \,\, \sum_{j=2n+4-a}^{2n+1} (b+j-2n-4)  |\mathfrak{B}_{n-1}(j-2)|.
\]
Here is an attempt of illustration: when $n=4$, the $p$'s for which the number of $(p,q)$ are counted in this case are all drawn on top of each other in the picture below.

\[
\begin{tikzpicture}
  \draw (0,0) -- (1,0) -- (2,0) -- (3,0) -- (4,0) -- (5,0) -- (6,0) -- (7,0)--
      (7,-1) -- (6,-1) -- (6,-2) -- (5,-2) -- (5,-3) -- (4,-3) -- (4,-4) -- (3,-4) -- (3,-3)  
       -- (2,-3) -- (2,-2) -- (1,-2) -- (1,-1) -- (0,-1) --cycle;
  \draw[very thick,color=red] (-1,0) -- (1,0) -- (1,-2) -- (3,-2) -- (3,0) -- (1,0) -- (2,0) -- (2,-3) -- (2,-1)-- (1,-1) -- (3,-1)
       -- (3,-4) -- (3,-3) -- (2,-3) -- (4,-3) -- (4,0) -- (5,0) -- (5,-2) -- (3,-2) -- (3,-1) -- (5,-1) -- (5,0) -- (3,0) ;
\end{tikzpicture}
\]

\subsection{Case II}
Next consider the case when $a \geq 3$, $b = 2n+1$ and $i \leq 2n + 1$, that is, the case when $p$ only contains one $f$.
 The path $p$ is then determined by the choice of $a$, and the path $q$ can be chosen as in Case I, except that there are now less restriction on $i$:
 any number $2 \leq i \leq j-1$ will do, so there are $(j-2)$ choices.

\[
s_{2} =  \sum_{a=3}^{2n} \,\,  \sum_{j=2n+4-a}^{2n+1} (j-2)  |\mathfrak{B}_{n-1}(j-2)|
\]

Again, an illustration for $n=4$:
\[
\begin{tikzpicture}
  \draw (0,0) -- (1,0) -- (2,0) -- (3,0) -- (4,0) -- (5,0) -- (6,0) -- (7,0)--
      (7,-1) -- (6,-1) -- (6,-2) -- (5,-2) -- (5,-3) -- (4,-3) -- (4,-4) -- (3,-4) -- (3,-3)  
       -- (2,-3) -- (2,-2) -- (1,-2) -- (1,-1) -- (0,-1) --cycle;
  \draw[very thick,color=red] (-1,0) -- (6,0) -- (6,-1) -- (1,-1) -- (1,0) -- (2,0) -- (2,-1) 
    -- (3,-1) -- (3,0) -- (4,0) -- (4,-1) -- (5,-1) -- (5,0);
\end{tikzpicture}
\]

\subsection{Case III}
Next consider the case when $a=2$, $b \leq 2n$ and $i \leq 2n + 1$ (which corresponds to the paths $p$ starting with $rf$).
 When $b$ is chosen, and the number of $p$ is counted, the possibilities for $q$
 are very restricted. We have one choice for $j$, and $(b-3)$ choices for $i$, that is

\[
s_{3} = \sum_{b=4}^{2n} |\mathfrak{B}_{n-1}(b-2)| (b-3).
\]
This is illustrated by the following picture.
\[
\begin{tikzpicture}
  \draw (0,0) -- (1,0) -- (2,0) -- (3,0) -- (4,0) -- (5,0) -- (6,0) -- (7,0)--
      (7,-1) -- (6,-1) -- (6,-2) -- (5,-2) -- (5,-3) -- (4,-3) -- (4,-4) -- (3,-4) -- (3,-3)  
       -- (2,-3) -- (2,-2) -- (1,-2) -- (1,-1) -- (0,-1) --cycle;
  \draw[very thick,color=red] (-1,0) -- (0,0) -- (0,-1) -- (1,-1) -- (1,-2) -- (3,-2) -- (3,-1)  -- (1,-1) -- (2,-1) -- (2,-3)
                   -- (3,-3) -- (3,-4) -- (3,-1) -- (5,-1) -- (5,-2) -- (3,-2) -- (4,-2) -- (4,-1) -- (4,-3) -- (3,-3);
\end{tikzpicture}
\]

\subsection{Case IV}
This case considers all remaining admissible pairs of paths. When $a=2$, $b=2n+1$ and $i \leq 2n + 1$, the path $p$ is fixed and $q$ is determined by its parameter i for which there are $(2n-1)$ choices.

Everything up to now has been restricted by the condition $i \leq 2n + 1$. It remains to consider the case when $q=rr \cdots rr$. In this case
 we have $\binom{2n}{n}$ possibilities for $p$.

Similarly, we have to consider the case $p = rr \cdots rr$, where we also have $\binom{2n}{n}$ possibilities for $q$.
 Here the single case $(p,q)=(rr \cdots rr,rr \cdots rr)$
 has to be subtracted though, in order not to count it twice.
\[s_{4} = (2n-1) + \binom{2n}{n} + \binom{2n}{n} - 1\]

\subsection{Simplification of the total sum}
All in all, in summing up we have $\tilde{b}_{n} = s_{1} + s_{2} + s_{3} + s_{4}$ or explicitly

\begin{align*}
\tilde{b}_{n} &= \sum_{b=4}^{2n} |\mathfrak{B}_{n-1}(b-2)| \,\, \sum_{a=3}^{b-1} \sum_{j=2n+4-a}^{2n+1} (b+j-2n-4)  |\mathfrak{B}_{n-1}(j-2)|\\
 &+ \sum_{a=3}^{2n} \sum_{j=2n+4-a}^{2n+1} (j-2)  |\mathfrak{B}_{n-1}(j-2)| \\
 &+ \sum_{b=4}^{2n} |\mathfrak{B}_{n-1}(b-2)| (b-3)\\
 &+ 2\binom{2n}{n} + (2n -2).
\end{align*}

In the first and second term (row) the variable $a$ only determines the index of a sum, so we can eliminate this variable.

\begin{align*}
\tilde{b}_{n} &= \sum_{b=4}^{2n} |\mathfrak{B}_{n-1}(b-2)| \sum_{j=2n-b+5}^{2n+1} (b+j-2n-4)^{2}  |\mathfrak{B}_{n-1}(j-2)|\\
 &+ \sum_{j=4}^{2n+1} (j-3)(j-2)  |\mathfrak{B}_{n-1}(j-2)| \\
 &+ \sum_{b=4}^{2n} |\mathfrak{B}_{n-1}(b-2)| (b-3)\\
 &+ 2\binom{2n}{n} + (2n -2).
\end{align*}

We next replace $j$ by $j-2n+b-3$ as index variable in the first expression, and the third and first terms are factored together. This gives

\begin{align*}
\tilde{b}_{n} &= \sum_{b=4}^{2n} |\mathfrak{B}_{n-1}(b-2)| \Bigg( (b-3) + \sum_{j=2}^{b-2} (j-1)^{2}  |\mathfrak{B}_{n-1}(j+2n-b+1)| \Bigg) \\
 &+ \Bigg( \sum_{j=4}^{2n+1} (j-3)(j-2)  |\mathfrak{B}_{n-1}(j-2)| \Bigg) + 2\binom{2n}{n} + (2n -2). 
\end{align*}

In the sum going from $j=2$ to $b-2$ we can sum from $1$ instead since the new first term will be $0$. The last term is $(b-3)^{2}$, and $(b-3)^{2} + (b-3)=(b-3)(b-2)$ so  

\begin{align*}
\tilde{b}_{n} &= \sum_{b=4}^{2n} |\mathfrak{B}_{n-1}(b-2)| \Bigg( (b-3)(b-2) + \sum_{j=1}^{b-3} (j-1)^{2}  |\mathfrak{B}_{n-1}(j+2n-b+1)| \Bigg) \\
 &+ \Bigg( \sum_{j=4}^{2n+1} (j-3)(j-2)  |\mathfrak{B}_{n-1}(j-2)| \Bigg) + 2\binom{2n}{n} + (2n -2). 
\end{align*}

Similarly we take out the last term of the last sum and write

\begin{align*}
\tilde{b}_{n} &= \sum_{b=4}^{2n} |\mathfrak{B}_{n-1}(b-2)| \Bigg( (b-3)(b-2) + \sum_{j=1}^{b-3} (j-1)^{2}  |\mathfrak{B}_{n-1}(j+2n-b+1)| \Bigg) \\
 &+ \Bigg( \sum_{j=4}^{2n} (j-3)(j-2)  |\mathfrak{B}_{n-1}(j-2)| \Bigg) + (2n-2)(2n-1) + 2\binom{2n}{n} + (2n -2). 
\end{align*}

Now the last sum over $j$ can be absorbed into the first term, and we can factor two terms at the end to obtain

\begin{align*}
\tilde{b}_{n} &= \sum_{b=4}^{2n} |\mathfrak{B}_{n-1}(b-2)| \Bigg( 2(b-3)(b-2) + \sum_{j=1}^{b-3} (j-1)^{2}  |\mathfrak{B}_{n-1}(j+2n-b+1)| \Bigg) \\
 &+ 4n(n-1) + 2\binom{2n}{n}. 
\end{align*}

Next we want to apply Lemma \ref{lemmax} to rewrite the functions $\mathfrak{B}$ in a more explicit form. Since $\mathfrak{B}$ is a piecewise defined function, we first split up our sums in a corresponding way.

\begin{align*}
\tilde{b}_{n} &= \sum_{b=4}^{n+1} |\mathfrak{B}_{n-1}(b-2)| \Bigg( 2(b-3)(b-2) + \sum_{j=1}^{b-3} (j-1)^{2}  |\mathfrak{B}_{n-1}(j+2n-b+1)| \Bigg) \\
 & +\sum_{b=n+2}^{2n} |\mathfrak{B}_{n-1}(b-2)| \Bigg( 2(b-3)(b-2) + \sum_{j=1}^{b-3} (j-1)^{2}  |\mathfrak{B}_{n-1}(j+2n-b+1)| \Bigg) \\
 & + 4n(n-1) + 2\binom{2n}{n} \\
 &= \sum_{b=4}^{n+1} |\mathfrak{B}_{n-1}(b-2)| \Bigg( 2(b-3)(b-2) + \sum_{j=1}^{b-3} (j-1)^{2}  |\mathfrak{B}_{n-1}(j+2n-b+1)| \Bigg) \\
 & +\sum_{b=n+2}^{2n} |\mathfrak{B}_{n-1}(b-2)| \Bigg( 2(b-3)(b-2) + \bigg( \sum_{j=1}^{b-n-2} (j-1)^{2}  |\mathfrak{B}_{n-1}(j+2n-b+1)|\\
 & \qquad + \sum_{j=b-n-1}^{b-3} (j-1)^{2}  |\mathfrak{B}_{n-1}(j+2n-b+1)| \bigg) \Bigg) \\
 &+ 4n(n-1) + 2\binom{2n}{n} \\
\end{align*}
Now the sums are partitioned properly and we can apply Lemma \ref{lemmax}. To avoid using a lot of binomial coefficients we temporarily introduce the notation
\[D(n,i,j):=\binom{2n-i-j-2}{n-i-1} - \binom{2n-i-j-2}{n-i-j-1}.\]
Lemma~\ref{lemmax} now gives

\begin{align*}
\tilde{b}_{n} &= \sum_{b=4}^{n+1} \sum_{k=1}^{n-2} 2^{k} D(n-1,b-3,k) \Bigg( 2(b-3)(b-2) + \sum_{j=1}^{b-3} (j-1)^{2}  2^{b-j-3} \Bigg) \\
 & +\sum_{b=n+2}^{2n} 2^{2n-b} \Bigg( 2(b-3)(b-2) + \bigg( \sum_{j=1}^{b-n-2} (j-1)^{2}  \sum_{k=1}^{n-2} 2^{k} D(n-1,2n+j-b,k)\\
 & \qquad + \sum_{j=b-n-1}^{b-3} (j-1)^{2}  2^{b-j-3} \bigg) \Bigg) \\
 &+ 4n(n-1) + 2\binom{2n}{n} \\
\end{align*}

Changing summation variables such that both sums over $b$ goes up to $n-2$ we obtain

\begin{align*}
\tilde{b}_{n} &= \sum_{b=1}^{n-2}  \sum_{k=1}^{n-2} 2^{k} D(n-1,b,k)  \Bigg( 2b(b+1) + \sum_{j=1}^{b} (j-1)^{2}  2^{b-j} \Bigg) \\
 & +\sum_{b=0}^{n-2} 2^{n-b-2} \Bigg( 2(b+n-1)(b+n) + \bigg( \sum_{j=1}^{b} (j-1)^{2}  \sum_{k=1}^{n-2} 2^{k} D(n-1,n+j-b-2,k)\\
 & \qquad + \sum_{j=b+1}^{b+n-1} (j-1)^{2}  2^{b+n-j-1} \bigg) \Bigg) \\
 &+ 4n(n-1) + 2\binom{2n}{n} \\
\end{align*}

Using Lemma \ref{lemmac1} (and changing variables) we can simplify the following two expressions.

\[\sum_{j=1}^{b} (j-1)^{2}  2^{b-j} = 3 \cdot 2^{b}-b^{2}-2b-3\]
\[\sum_{j=b+1}^{b+n-1} (j-1)^{2}  2^{b+n-j-1} = 2^{n-1}(b^{2}+2b+3)-n^{2}-b^{2}-2bn-2\]
Inserting this into our expression for $\tilde{b}_{n}$ we have 

\begin{align*}
\tilde{b}_{n} &= \sum_{b=1}^{n-2}  \sum_{k=1}^{n-2} 2^{k} D(n-1,b,k)  ( 3 \cdot 2^{b}+b^{2}-3 ) \\
 & +\sum_{b=0}^{n-2} 2^{n-b-2} \Bigg( 2(b+n-1)(b+n) + \bigg( \sum_{j=1}^{b} (j-1)^{2}  \sum_{k=1}^{n-2} 2^{k} D(n-1,n+j-b-2,k)\\
 & \qquad + 2^{n-1}(b^{2}+2b+3)-n^{2}-b^{2}-2bn-2 \bigg) \Bigg) \\
 &+ 4n(n-1) + 2\binom{2n}{n} \\
\end{align*}

This can be rewritten as
\begin{align*}
\tilde{b}_{n} &= \sum_{b=1}^{n-2}  \sum_{k=1}^{n-2} 2^{k} D(n-1,b,k)  ( 3 \cdot 2^{b}+b^{2}-3 ) \\
 & +\sum_{b=0}^{n-2} 2^{n-b-2} \Bigg( \sum_{j=1}^{b} (j-1)^{2}  \sum_{k=1}^{n-2} 2^{k} D(n-1,n+j-b-2,k) \Bigg) \\
 & + \sum_{b=0}^{n-2} \big( 2^{2n-b-3}(b^{2}+2b+3)+2^{n-b-2}(n^{2}+b^{2}+2bn-2b-2n-2) \big) \\
 &+ 4n(n-1) + 2\binom{2n}{n} \\
\end{align*}

Here the last sum over $b$ equals $2^{2n+1}-2^{n}(n+3)-4n(n-1)$ which is easily proved by application of Lemma~\ref{lemmac2} and Lemma~\ref{lemmac1}. Canceling the term $4n(n-1)$ we have

\begin{align*}
\tilde{b}_{n} &= \sum_{b=1}^{n-2}  \sum_{k=1}^{n-2} 2^{k} D(n-1,b,k)  ( 3 \cdot 2^{b}+b^{2}-3 ) \\
 & +\sum_{b=0}^{n-2} 2^{n-b-2} \Bigg(\sum_{j=1}^{b} (j-1)^{2}  \sum_{k=1}^{n-2} 2^{k} D(n-1,n+j-b-2,k) \Bigg) \\
 &+ 2^{2n+1}-2^{n}(n+3) + 2\binom{2n}{n} \\
\end{align*}
 
Expanding our function $D$ we more explicitly have

\begin{align*}
\tilde{b}_{n} & = \sum_{b=1}^{n-2}  \sum_{k=1}^{n-2} 2^{k} \bigg( \binom{2n-b-k-4}{n-b-2} - \binom{2n-b-k-4}{n-b-k-2}\bigg) ( 3 \cdot 2^{b}+b^{2}-3 ) \\
 & +\sum_{b=0}^{n-2} 2^{n-b-2} \Bigg(\sum_{j=1}^{b} (j-1)^{2}  \sum_{k=1}^{n-2} 2^{k} \bigg( \binom{n+b-j-k-2}{b-j}-\binom{n+b-j-k-2}{b-j-k} \bigg) \Bigg) \\
 &+ 2^{2n+1}-2^{n}(n+3) + 2\binom{2n}{n} \\
\end{align*}

Changing order of summation to merge the two sums over $k$, and rewriting some binomial coefficients we obtain

\begin{align*}
\tag{1} 
 \tilde{b}_{n} & = \sum_{k=1}^{n-2}2^{k} \sum_{b=1}^{n-2} \Bigg( (3 \cdot 2^{b} +b^{2}-3) \bigg( \binom{2n-b-k-4}{n-b-2} - \binom{2n-b-k-4}{n-2} \bigg) \\
          &+ 2^{n-b-2} \sum_{j=1}^{b}(j-1)^{2} \bigg( \binom{n+b-j-k-2}{n-k-2}-\binom{n+b-j-k-2}{n-2} \bigg) \Bigg)\\
         & -2^{n}(n+3) + 2^{2n+1} + 2\binom{2n}{n} \\
\end{align*}

\subsection{Further simplification}
The remaining simplification of the above expression, including the proof of Lemma \ref{lemmaCK2}
 above, is due to Christian Krattenthaler.

We continue by simplifying the sum over $j$ in $(1)$. For this purpose we write 
\[(j-1)^{2} = (n+b-j-k)(n+b-j-k-1)-(n+b-j-k-1)(2n+2b-2k-3)+(n+b-k-2)^{2}\]
and use this to simplify the binomial coefficients. Thus the sum over $j$ becomes

\begin{align*}
&(n-k)(n-k-1) \sum_{j=1}^{b}\binom{n+b-j-k}{n-k} -n(n-1) \sum_{j=1}^{b}\binom{n+b-j-k}{n} \\
 &-(2n+2b-2k-3)\Big( (n-k-1) \sum_{j=1}^{b} \binom{n+b-j-k-1}{n-k-1}\\
 & \qquad \qquad \qquad \qquad \qquad -(n-1)\sum_{j=1}^{b} \binom{n+b-j-k-1}{n-1} \Big)\\
 &+ (n+b-k-2)^{2} \Big(\sum_{j=1}^{b}\binom{n+b-j-k-2}{n-k-2}\\
 & \qquad \qquad \qquad \qquad \qquad-\sum_{j=1}^{b}\binom{n+b-j-k-2}{n-2}\Big) .\\ 
\end{align*}

Changing variables in each sum and then removing zero-terms we obtain

\begin{align*}
(n-k)(n-k-1) &\sum_{j=n-k}^{n+b-k-1}\binom{j}{n-k} -n(n-1) \sum_{j=n-k}^{n+b-k-1}\binom{j}{n} \\
 -(2n+2b-2k-3)\Big( (n-k-1) &\sum_{j=n-k-1}^{n+b-k-2} \binom{j}{n-k-1} -(n-1)\sum_{j=n-k-1}^{n+b-k-2} \binom{j}{n-1} \Big)\\
 + (n+b-k-2)^{2} \Big(&\sum_{j=n-k-2}^{n+b-k-3}\binom{j}{n-k-2}-\sum_{j=n-k-2}^{n+b-k-3}\binom{j}{n-2}\Big) \\
 = (n-k)(n-k-1) &\sum_{j=n-k}^{n+b-k-1}\binom{j}{n-k} -n(n-1) \sum_{j=n}^{n+b-k-1}\binom{j}{n} \\
 -(2n+2b-2k-3)\Big( (n-k-1) &\sum_{j=n-k-1}^{n+b-k-2} \binom{j}{n-k-1} -(n-1)\sum_{j=n-1}^{n+b-k-2}\binom{j}{n-1} \Big)\\
 + (n+b-k-2)^{2} \Big(&\sum_{j=n-k-2}^{n+b-k-3}\binom{j}{n-k-2}-\sum_{j=n-2}^{n+b-k-3}\binom{j}{n-2}\Big) \\
\end{align*}

to which we apply Lemma \ref{lemmaCK} six times. This gives

\begin{multline*}
(n-k)(n-k-1) \binom{n+b-k}{n-k+1} -n(n-1)\binom{n+b-k}{n+1} \\
 -(2n+2b-2k-3)\Bigg( (n-k-1)\binom{n+b-k-1}{n-k} -(n-1)\binom{n+b-k-1}{n} \Bigg)\\
 + (n+b-k-2)^{2} \Bigg(\binom{n+b-k-2}{n-k-1}-\binom{n+b-k-2}{n-1}\Bigg) \\
\end{multline*}

which simplifies to

\begin{align*}
\binom{n+b-k-1}{n-k+1}+\binom{n+b-k-2}{n-k+1}-\binom{n+b-k-1}{n+1}-\binom{n+b-k-2}{n+1}. \tag{2}
\end{align*}
So inserting $(2)$ back into $(1)$ we now have

\begin{align*}
 \tilde{b}_{n} & = \sum_{k=1}^{n-2}2^{k} \sum_{b=1}^{n-2} \Bigg( (3 \cdot 2^{b} +b^{2}-3)(\binom{2n-b-k-4}{n-b-2} - \binom{2n-b-k-4}{n-2}) \\
          & \qquad \qquad \qquad \qquad+ 2^{n-b-2} \Big( \binom{n+b-k-1}{n-k+1}+\binom{n+b-k-2}{n-k+1}\\
& \qquad \qquad \qquad \qquad-\binom{n+b-k-1}{n+1}-\binom{n+b-k-2}{n+1}  \Big) \Bigg)\\
         & -2^{n}(n+3) + 2^{2n+1} + 2\binom{2n}{n}. \tag{3}  \\
\end{align*}
On the second line we reverse the order of the $b$-summation, that is, we replace $b$ by $n-1-b$ which yields
\begin{align*}
 \tilde{b}_{n} & = \sum_{k=1}^{n-2}2^{k} \sum_{b=1}^{n-2} \Bigg( (3 \cdot 2^{b} +b^{2}-3)(\binom{2n-b-k-4}{n-b-2} - \binom{2n-b-k-4}{n-2}) \\
          & \qquad \qquad \qquad \qquad+ 2^{b-1} \Big( \binom{2n-b-k-2}{n-k+1}+\binom{2n-b-k-3}{n-k+1}\\
& \qquad \qquad \qquad \qquad-\binom{2n-b-k-2}{n+1}-\binom{2n-b-k-3}{n+1}  \Big) \Bigg)\\
         & -2^{n}(n+3) + 2^{2n+1} + 2\binom{2n}{n}.  \\
\end{align*}

Introducing a new summation variable $\ell = b+k$ we can write this as

\begin{align*}
 \tilde{b}_{n} & = \sum_{k=1}^{n-2}2^{k} \sum_{b=1}^{n-2}  (b^{2}-3)(\binom{2n-b-k-4}{n-k-2} - \binom{2n-b-k-4}{n-2}) \\
          &+ \sum_{k=1}^{n-2} \sum_{\ell = k+1}^{2n-2} 2^{\ell} \Bigg( 3\binom{2n-\ell-4}{n-k-2}+3\binom{2n- \ell -4}{n-2} \\
& + \frac{1}{2}\Big( \binom{2n-\ell-2}{n-k+1}+\binom{2n-\ell -3}{n-k+1}-\binom{2n-\ell -2}{n+1}-\binom{2n-\ell -3}{n+1}  \Big) \Bigg) \\
         & -2^{n}(n+3) + 2^{2n+1} + 2\binom{2n}{n}.  \\
\end{align*}

Next we apply Lemma \ref{lemmaCK2} to all the sums over $\ell$. This gives

\begin{align*}
 \tilde{b}_{n} & = \sum_{k=1}^{n-2}2^{k} \sum_{b=1}^{n-2}  (b^{2}-3)(\binom{2n-b-k-4}{n-k-2} - \binom{2n-b-k-4}{n-2}) \\
          &+ 3 \sum_{k=1}^{n-2} 2^{k+1}\sum_{s=0}^{n-3} \binom{2n-k-4}{s} - 3\sum_{k=1}^{n-2} 2^{k+1}\sum_{s=0}^{n-k-3} \binom{2n- k -4}{s} \\
& + \sum_{k=1}^{n-2} 2^{k} \sum_{s=0}^{n-4}\binom{2n-k-2}{s} + \sum_{k=1}^{n-2} 2^{k} \sum_{s=0}^{n-5}\binom{2n-k-3}{s}\\
& -\sum_{k=1}^{n-2} 2^{k}\sum_{s=0}^{n-k-4}\binom{2n-k -2}{s} - \sum_{k=1}^{n-2} 2^{k}\sum_{s=0}^{n-k-5}\binom{2n-k -3}{s} \\
         & -2^{n}(n+3) + 2^{2n+1} + 2\binom{2n}{n}.  \tag{4} \\
\end{align*}

To simplify $(4)$ we first note that 
\begin{align*}
(1+1)^{2n-k-2}&=\sum_{s=0}^{2n-k-2}\binom{2n-k-2}{s} = \sum_{s=0}^{n+1}\binom{2n-k-2}{s} + \sum_{s=n+2}^{2n-k-2}\binom{2n-k-2}{s} \\
&= \sum_{s=0}^{n+1}\binom{2n-k-2}{s} + \sum_{s=0}^{n-k-4}\binom{2n-k-2}{n-k-4-s}\\
&= \sum_{s=0}^{n+1}\binom{2n-k-2}{s} + \sum_{s=0}^{n-k-4}\binom{2n-k-2}{s}  \\
\end{align*}
so \[\sum_{s=0}^{n-k-4}\binom{2n-k-2}{s} = 2^{2n-k-2} - \sum_{s=0}^{n+1} \binom{2n-k-2}{s}.\]

Using this, and similar identities, we transform the expression $(4)$ to 

\begin{align*}
 \tilde{b}_{n} & = \sum_{k=1}^{n-2} 2^{k} \sum_{b=1}^{n-2}  (b^{2}-3)(\binom{2n-b-k-4}{n-k-2} - \binom{2n-b-k-4}{n-2}) \\
          &+ 3 \sum_{k=1}^{n-2} 2^{k+1}\sum_{s=0}^{n-3} \binom{2n-k-4}{s} - 3(n-2)2^{2n-3}+3\sum_{k=1}^{n-2} 2^{k+1}\sum_{s=0}^{n-2}\binom{2n- k -4}{s} \\
& + \sum_{k=1}^{n-2} 2^{k} \sum_{s=0}^{n-4}\binom{2n-k-2}{s} + \sum_{k=1}^{n-2} 2^{k} \sum_{s=0}^{n-5}\binom{2n-k-3}{s}\\
& -(n-2)2^{2n-2} + \sum_{k=1}^{n-2} 2^{k}\sum_{s=0}^{n+1}\binom{2n-k -2}{s} -(n-2)2^{2n-3} + \sum_{k=1}^{n-2} 2^{k} \sum_{s=0}^{n+1}\binom{2n-k -3}{s} \\
         & -2^{n}(n+3) + 2^{2n+1} + 2\binom{2n}{n}.  \tag{5} \\
\end{align*}

In the second, third and fourth line we can interchange summations over $k$ and $s$ and apply Lemma \ref{lemmaCK2} again. However, one has to be a bit careful since the sums over $k$ are in fact
 shorter than required by the lemma. For example, the first of these sums can be simplified as follows.

\begin{align*}
  3 \sum_{k=1}^{n-2}& 2^{k+1}\sum_{s=0}^{n-3} \binom{2n-k-4}{s}=3\sum_{s=0}^{n-3}\sum_{k=1}^{n-2} 2^{k+1} \binom{2n-k-4}{s}\\
 &=  3 \sum_{s=0}^{n-3} \Big( \sum_{k=1}^{2n-s-4} 2^{k+1}\binom{2n-k-4}{s} - \sum_{k=n-1}^{2n-s-4} 2^{k+1}\binom{2n-k-4}{s} \Big) \\
 &=  3 \sum_{s=0}^{n-3} \Big( 4 \sum_{k=0}^{2n-s-5}\binom{2n-4}{k} - 2^{n} \sum_{k=0}^{n-s-3}\binom{n-2}{k} \Big) \\
 &= 12 \sum_{k=0}^{n-3} (n-2)\binom{2n-4}{k} + 12 \sum_{k=0}^{n-3}(n-k-2)\binom{2n-4}{n+k-2} \\
 & \qquad - 3 \cdot 2^{n} \sum_{k=0}^{n-3}(n-k-2) \binom{n-2}{k}  \\
 &= 6(n-2) \Big( 2^{2n-4}-\binom{2n-4}{n-2} \Big) + 6(n-2)2^{2n-4} - 3 \cdot 2^{n}(n-2)2^{n-3}\\
 &= 3(n-2)2^{2n-3}-6(n-2)\binom{2n-4}{n-2}.
\end{align*}
We treat the other double sums over $k$ and $s$ in $(5)$ similarly, and after many, many calculations we arrive at the expression

\begin{align*}
 \tilde{b}_{n} & = \sum_{k=1}^{n-2} 2^{k} \sum_{b=1}^{n-2}  (b^{2}-3)(\binom{2n-b-k-4}{n-k-2} - \binom{2n-b-k-4}{n-2}) \\
          &+ (3n+5)2^{2n-2} + (n^{2}-1)2^{n-1} - 4(1-7n+7n^{2}+3n^{3})\frac{(2n-3)!}{(n-2)!(n+1)!}. \tag{6}
\end{align*}

Now we turn to the sum over $b$. By expressing \[b^{2} = (2n-b-k-2)(2n-b-k-3)-(4n-2k-5)(2n-b-k-3)+(2n-k-3)^{2}\]
and proceeding by application of Lemma \ref{lemmaCK} as in the beginning of this section, and then rewriting the binomial coefficients we obtain

\begin{align*}
  \sum_{b=1}^{n-2}&  (b^{2}-3)(\binom{2n-b-k-4}{n-k-2} - \binom{2n-b-k-4}{n-2}) \\
 =& (n-k)(n-k-1)\binom{2n-k-2}{n-k+1} - n(n-1)\binom{2n-k-2}{n+1}\\
 &-(4n-2k-5)(n-k-1)\binom{2n-k-3}{n-k} + (4n-2k-5)(n-1)\binom{2n-k-3}{n}\\
 &+((2n-k-3)^{2}-3)\binom{2n-k-4}{n-k-1} - ((2n-k-3)^{2}-3)\binom{2n-k-4}{n-1}\\
 =& -2n^{2}\binom{2n-k-2}{n+1} + (2n^{2}-1)\binom{2n-k-3}{n} -3\binom{2n-k-4}{n-3} + 3\binom{2n-k-4}{n-1}\\
 &+(2n-5)\binom{2n-k-3}{n-3} - 2(n-2)^{2}\binom{2n-k-3}{n-2}\\
 &-2(n-1)(n-2)\binom{2n-k-3}{n-1} + 2(n+1)(n-1)\binom{2n-k-3}{n+1}\\
 &+2\binom{2n-k-2}{n-3} - 2(n-2)\binom{2n-k-2}{n-2} +2(n-2)(n-1)\binom{2n-k-2}{n-1}.
\end{align*}

Inserting this into $(6)$ we can apply Lemma \ref{lemmaCK2} again to all the sums over $k$. After considerable simplification we arrive at
\[\tilde{b}_{n} = (3n+5)2^{2n-2} - 2(3n-1)\binom{2n-2}{n-1}.\]

This concludes the proof.

\section{The resulting sequence}
\label{s7}
The above formula yields the integer sequence $\{\tilde{b}_{n}\}_{n \geq 2}$, which could not be found in OEIS~\cite{OEIS}. Its first entries are

\begin{center}
 \begin{tabular}{l | l l l l l l l l l}

$n$&2&3&4&5&6&7&8&9&10\\ \hline
$\tilde{b}_{n}$&24&128&648&3160&14984&69536&317264&1427912&6355080.\\ 
\end{tabular}
\end{center}

\section{Acknowledgements}
 I am particularly grateful to Professor Christian Krattenthaler, for his help with simplification of the summation formula which led
 to the short expression for the sequence in Theorem \ref{thmmain}. I also appreciate the advice of Professor Volodymyr Mazorchuk.
\vspace{5mm}

\vspace{1cm}

\noindent Department of Mathematics, Uppsala University, Box 480, SE-751 06, Uppsala, Sweden, email: jonathan.nilsson@math.uu.se


\begin{thebibliography}{9}
\bibitem{AKOP} G.~Andrews, C.~Krattenthaler, 
L.~Orsina, P.~Papi; {\em ad}-Nilpotent $\mathfrak{b}$-ideals
in ${\mathfrak sl}(n)$ having a fixed class of nilpotence:
combinatorics and enumeration. Trans. Amer. Math. Soc. 
{\bf 354} (2002), no. 10, 3835--3853.

\bibitem{BaMa}
    K. Baur, V. Mazorchuk;
  \emph{Combinatorial analogues of ad-nilpotent ideals for untwisted affine Lie algebras} 
  Preprint arXiv:1108.3659

\bibitem{Bo}
  Bourbaki;
  \emph{Lie groups and Lie algebras. Chapter 4-6.}
    Springer (2002).

\bibitem{CP1} P.~Cellini, P.~Papi; {\em ad}-Nilpotent 
ideals of a Borel subalgebra. J. Algebra {\bf 225} 
(2000), no. 1, 130--141.
\bibitem{CP2} P.~Cellini, P.~Papi; {\em ad}-Nilpotent 
ideals of a Borel subalgebra. II. Special issue in 
celebration of Claudio Procesi's 60th birthday. J. 
Algebra {\bf 258} (2002), no. 1, 112--121.

\bibitem{GRI}
R. Grimaldi;
  \emph{Discrete and combinatorial mathematics: An applied introduction.}
  Addison-Wesley, 2003.


\bibitem{KOP}
  C. Krattenthaler, L. Orsina, P. Papi;
  \emph{Enumeration of ad-nilpotent $\mathfrak{b}$-ideals for simple Lie algebras.}
    Special issue in memory of Rodica Simion. Adv. in Appl. Math. {\bf 28} (2002), no. 3-4, 478-522.

\bibitem{MP}
R. Moody, A. Pianzola; 
  \emph{Lie algebras with triangular decompositions.} 
Canadian Mathematical Society Series of Monographs and Advanced Texts. A Wiley-Interscience Publication. John Wiley \& Sons, Inc., New York, 1995.


\bibitem{LJS}
L. J. Slater;
  \emph{Generalized hypergeometric functions}.
  Cambridge University Press, Cambridge, 1966.



\bibitem{OEIS}
        N. Sloane,
  \emph{ The on-line encyclopedia of integer sequences.}
  http://oeis.org



\end{thebibliography}
\end{document}